\def\ps@headings{%
\def\@oddhead{\mbox{}\scriptsize\rightmark \hfil \thepage}%
\def\@evenhead{\scriptsize\thepage \hfil \leftmark\mbox{}}%
\def\@oddfoot{}%
\def\@evenfoot{}}
\makeatother \pagestyle{headings}
\def\boxit#1{\vbox{\hrule\hbox{\vrule\kern3pt
        \vbox{\kern3pt#1\kern3pt}\kern3pt\vrule}\hrule}}
\def\reals{ { {\rm  I \kern-0.15em R }  } }
\def\complex{ {\,{{\rm C} \kern-0.50em \raise0.20ex {  |}}\, }}
\def\Rbf{{\bf R}}
\def\Ac{{\cal A}}
\def\Ec{{\cal E}}
\def\Fc{{\cal F}}
\def\be{\begin{equation}}
\def\ee{\end{equation}}
\def\defeq{{\stackrel{\Delta}{=}}}
\def\scalefig#1{\epsfxsize #1\textwidth}
\def\Rxx{\Rbf_{\ssstyle X\kern-.1em X}}
\let\ssstyle=\scriptscriptstyle
\def\etal{{\it et al. \/}}
\def\ie{{\it i.e.,\ \/}}
\def\Kout{\setbox1=\hbox{\Huge\bf K}\hbox to
1.05\wd1{\hspace{.05\wd1}
\def\Sout{\setbox1=\hbox{\Huge\bf S}\hbox to 1.05\wd1{\hspace{.05\wd1}

\def\ie{{\it i.e.,\ \/}}

\def\defeq{{\,\stackrel{\Delta}{=}}\,}

\def\scalefig#1{\epsfxsize #1\textwidth}
\def\nn{{\nonumber}}

\newtheorem{lemma}{Lemma}
\newtheorem{theorem}{Theorem}

\newtheorem{definition}{Definition}

\usepackage{bbm}
\begin{document}

\title{\huge Deterministic Sequencing of Exploration and Exploitation for
Multi-Armed Bandit Problems}

\author{Sattar Vakili,~~~Keqin Liu,~~~Qing Zhao
\thanks{This work was supported by the Army Research Office under Grant
W911NF-12-1-0271. Part of this work was presented at the \emph{Allerton Conference on Communications, Control, and Computing}, September, 2011.}
\thanks{Sattar Vakili, Keqin Liu, and Qing Zhao are with the Department of Electrical and Computer Engineering,
        University of California, Davis, CA, 95616, USA. Email: \{svakili, kqliu, qzhao\}@ucdavis.edu.}
}

\maketitle

\begin{abstract}
In the Multi-Armed Bandit (MAB) problem, there is a given set of
arms with unknown reward models. At each time, a player selects one
arm to play, aiming to maximize the total expected reward over a
horizon of length~$T$. An approach based on a Deterministic
Sequencing of Exploration and Exploitation (DSEE) is developed for
constructing sequential arm selection policies. It is shown that for
all light-tailed reward distributions, DSEE achieves the optimal logarithmic
order of the regret, where regret is defined as the total expected reward loss
against the ideal case with known reward models. For heavy-tailed reward distributions,
DSEE achieves $O(T^{1/p})$ regret when the moments of the reward distributions
exist up to the $p$th order for $1<p\le 2$ and $O(T^{1/(1+p/2)})$ for $p>2$. With the knowledge of an upperbound on a finite moment
of the heavy-tailed reward distributions, DSEE offers the optimal logarithmic regret order. The proposed DSEE approach
complements existing work on MAB by providing corresponding results
for general reward distributions. Furthermore, with a clearly
defined tunable parameter---the cardinality of the exploration
sequence, the DSEE approach is easily extendable to variations of
MAB, including MAB with various
objectives, decentralized MAB with multiple players and incomplete
reward observations under collisions, MAB with unknown Markov dynamics, and combinatorial MAB with dependent
arms that often arise in
network optimization problems such as the shortest path, the minimum spanning, and the dominating set problems
under unknown random weights.
\end{abstract}

\begin{IEEEkeywords}
Multi-armed bandit, regret, deterministic sequencing of exploration and
exploitation, decentralized
multi-armed bandit, restless multi-armed bandit, combinatorial multi-armed bandit.
\end{IEEEkeywords}

\section{Introduction}\label{sec:intro}

\subsection{Multi-Armed Bandit}
\label{sec:intro_MAB}

Multi-armed bandit (MAB) is a class of sequential learning and
decision problems with unknown models. In the classic MAB, there are
$N$ independent arms and a single player. At each time, the player
chooses one arm to play and obtains a random reward drawn i.i.d.
over time from an unknown distribution. Different arms may
have different reward distributions. The design objective is a
sequential arm selection policy that maximizes the total expected
reward over a horizon of length $T$. The MAB problem finds a
wide range of applications including clinical trials, target
tracking, dynamic spectrum access, Internet advertising and Web
search, and social economical networks
(see~\cite{{Robbins:52BAMS},{Santner&Tamhane84book},{MahajanTneketzis07FASM}}
and references therein).

In the MAB problem, each received reward plays two roles: increasing
the wealth of the player, and providing one more observation for
learning the reward statistics of the arm. The tradeoff between
exploration and exploitation is thus clear: which role should be
emphasized in arm selection---an arm less explored thus holding
potentials for the future or an arm with a good history of rewards?
In 1952, Robbins addressed the two-armed bandit
problem~\cite{Robbins:52BAMS}. He showed that the same maximum
average reward achievable under a known model can be obtained by
dedicating two arbitrary sublinear sequences for playing each of the
two arms. In 1985, Lai and Robbins proposed a finer performance
measure, the so-called regret, defined as the expected total reward
loss with respect to the ideal scenario of known reward models
(under which the best arm is always played)~\cite{Lai&Robbins85AAM}.
Regret not only indicates whether the maximum average reward under
known models is achieved, but also measures the convergence rate of
the average reward, or the effectiveness of learning. Although all
policies with sublinear regret achieve the maximum average reward,
the difference in their total expected reward can be arbitrarily
large as $T$ increases. The minimization of the regret is thus of
great interest. Lai and Robbins showed that the minimum regret has a
logarithmic order in $T$ and constructed explicit policies to
achieve the minimum regret growth rate for several
reward distributions including Bernoulli, Poisson, Gaussian,
Laplace~\cite{Lai&Robbins85AAM} under the assumption that the distribution type is known.
In~\cite{AgrawalEtal95AAP}, Agrawal developed simpler index-type policies in explicit form
for the above four distributions as well as exponential distribution assuming known distribution type.
In~\cite{Auer&etal02ML},  Auer~\etal in
2002~\cite{Auer&etal02ML} developed order optimal index policies for any unknown distribution
with bounded support assuming the support range is known.

In these classic policies developed in~\cite{Lai&Robbins85AAM,AgrawalEtal95AAP,Auer&etal02ML}, arms are prioritized according to
two statistics: the sample mean $\bar{\theta}(t)$ calculated from
past observations up to time $t$ and the number $\tau(t)$ of times
that the arm has been played up to $t$. The larger $\bar{\theta}(t)$
is or the smaller  $\tau(t)$ is, the higher the priority given to
this arm in arm selection. The tradeoff between exploration and
exploitation is reflected in how these two statistics are combined
together for arm selection at each given time $t$. This is most
clearly seen in the UCB (Upper Confidence Bound) policy proposed by
Auer~\etal in~\cite{Auer&etal02ML}, in which an index $I(t)$ is
computed for each arm and the arm with the largest index is chosen.
The index has the following simple form:
\begin{eqnarray}\label{eqn:AuerIndex}
I(t)=\bar{\theta}(t)+\sqrt{2\frac{\log t}{\tau(t)}}.
\end{eqnarray}This index form is intuitive in the light of
Lai and Robbins's result on the logarithmic order of the minimum
regret which indicates that each arm needs to be explored on the
order of $\log t$ times. For an arm sampled at a smaller order than
$\log t$ times, its index, dominated by the second term  (referred to as the upper
confidence bound), will be sufficient
large for large $t$ to ensure further exploration.

\subsection{Deterministic Sequencing of Exploration and Exploitation}
\label{sec:intro_DSEE}

In this paper,
we develop a new approach to the MAB problem. Based on a
Deterministic Sequencing of Exploration and Exploitation (DSEE),
this approach differs from the classic policies proposed
in~\cite{Lai&Robbins85AAM,{AgrawalEtal95AAP},{Auer&etal02ML}} by
separating in time the two objectives of exploration and exploitation.
Specifically, time is divided into two interleaving sequences: an exploration sequence
and an exploitation sequence.
In the former, the
player plays all arms in a round-robin fashion. In the latter,
the player plays the arm with the largest sample mean (or a properly chosen mean estimator).
Under this approach, the tradeoff between exploration and
exploitation is reflected in the cardinality of the exploration
sequence. It is not difficult to see that the regret order is lower
bounded by the cardinality of the exploration sequence since a fixed
fraction of the exploration sequence is spent on bad arms.
Nevertheless, the exploration sequence needs to be chosen
sufficiently dense to ensure effective learning of the best arm.
The key issue here is to find the minimum cardinality of the exploration
sequence that ensures a reward loss in the exploitation sequence caused by incorrectly identified arm rank
having an order no larger than the cardinality of the exploration
sequence.

We show that when the reward distributions are light-tailed, DSEE
achieves the optimal logarithmic order of the regret using an
exploration sequence with $O(\log T)$ cardinality. For heavy-tailed reward distributions,
DSEE achieves $O(T^{1/p})$ regret when the moments of the reward distributions
exist up to the $p$th order for $1<p\le 2$ and $O(T^{1/(1+p/2)})$ for $p>2$.
With the knowledge of an upperbound on a finite moment
of the heavy-tailed reward distributions, DSEE offers the optimal logarithmic regret order.

We point out that both the classic policies
in~\cite{Lai&Robbins85AAM,{AgrawalEtal95AAP},{Auer&etal02ML}} and
the DSEE approach developed in this paper require certain knowledge on the
reward distributions for policy construction. The classic policies
in~\cite{Lai&Robbins85AAM,AgrawalEtal95AAP,Auer&etal02ML} apply to specific distributions with either
known distribution types~\cite{Lai&Robbins85AAM,{AgrawalEtal95AAP}} or known finite support range~\cite{Auer&etal02ML}.
The advantage of the DSEE approach is that it applies to any distribution without knowing the distribution type.
The caveat is that it requires the knowledge of a positive lower bound on
the difference in the reward means of the best and the second best
arms. This can be a more demanding requirement than the distribution
type or the support range of the reward distributions. By increasing the
cardinality of the exploration sequence, however, we show that DSEE
achieves a regret arbitrarily close to the logarithmic order without
\emph{any} knowledge of the reward model. We further emphasize that
the sublinear regret for reward distributions with heavy tails is
achieved without any knowledge of the reward model (other than a
lower bound on the order of the highest finite moment).

\subsection{Extendability to Variations of MAB}
\label{sec:intro-variations}

Different from the classic policies proposed
in~\cite{Lai&Robbins85AAM,{AgrawalEtal95AAP},{Auer&etal02ML}}, the DSEE
approach has a
clearly defined tunable parameter---the cardinality of the
exploration sequence---which can be adjusted according to the
``hardness'' (in terms of learning) of the reward distributions and
observation models. It is thus more easily extendable to handle
variations of MAB, including decentralized MAB with multiple players and incomplete
reward observations under collisions, MAB with unknown Markov dynamics, and combinatorial MAB
with dependent arms that often arise in
network optimization problems such as the shortest path, the minimum spanning, and the dominating set problems
under unknown random weights.

Consider first a decentralized MAB problem in which multiple
distributed players learn from their local observations and make decisions independently. While other players' observations
and actions are unobservable, players' actions affect each other: conflicts occur when multiple players choose the same
arm at the same time and conflicting players can only share the reward offered by the arm, not necessarily with conservation.
Such an event is referred to as a collision and is unobservable to the players. In other words,
a player does not know whether it is
involved in a collision, or equivalently, whether the received
reward reflects the true state of the arm. Collisions thus not only
result in immediate reward loss, but also corrupt the observations
that a player relies on for learning the arm rank.
Such decentralized learning problems arise in communication networks where multiple distributed users share the access to a common set
of channels, each with unknown communication quality. If multiple users access the same channel at the same time, no one
transmits successfully or only one captures the channel through certain signaling schemes such as carrier sensing. Another
application is multi-agent systems in which $M$
agents search or collect targets in $N$ locations. When multiple
agents choose the same location, they share the reward in an unknown
way that may depend on which player comes first or the number of
colliding agents.

The deterministic separation of exploration and exploitation in
DSEE, however, can ensure that collisions are contained within the
exploitation sequence. Learning in the exploration sequence is thus
carried out using only reliable observations. In particular, we show that under the
DSEE approach, the system regret, defined as
the total reward loss with respect to the ideal scenario of known
reward models and centralized scheduling among players, grows at the
same orders as the regret in the single-player MAB under the same
conditions on the reward distributions. These results hinge on the
extendability of DSEE to targeting at arms with arbitrary ranks (not
necessarily the best arm) and the sufficiency in learning the arm
rank solely through the observations from the exploration sequence.

The DSEE approach can also be extended to MAB with unknown Markov reward models
and the so-called combinatorial MAB where there is a large number of arms dependent
through a smaller number of unknowns. Since these two extensions are
more involved and require separate investigations, they are not included in
this paper and can be found in~\cite{Liu&etal:13IT,Liu&Zhao:WiOpt12}.

\subsection{Related Work}\label{sec:intro_rw}

There have been a number of recent studies on extending the classic policies
of MAB to more general settings. In~\cite{Bubeck},
the UCB policy proposed by
Auer~\etal in~\cite{Auer&etal02ML} was extended to achieve logarithmic regret
order for heavy-tailed reward distributions when an upper bound on a finite moment is known.
The basic idea is to replace
the sample mean in the UCB index with a truncated mean estimator which allows
a mean concentration result similar to the Chernoff-Hoeffding bound. The computational
complexity and memory requirement of the resulting UCB policy, however, are much
higher since all past observations need to be stored and truncated differently at
each time $t$.
The result of
achieving the logarithmic regret order for heavy-tailed distributions under DSEE
in Sec.~\ref{sec:HeavyLog} is inspired by~\cite{Bubeck}. However, the focus of this paper is to present a general
approach to MAB, which not only provides a different policy for achieving logarithmic
regret order for both light-tailed and heavy-tailed distributions, but also offers
solutions to various MAB variations as discussed in~Sec.~\ref{sec:intro-variations}.
DSEE also offers the option of a sublinear regret order for heavy-tailed distributions
with a constant memory requirement, sublinear complexity, and no requirement on any
knowledge of the reward distributions (see Sec.~\ref{sec:HeavySub}).

In the context
of decentralized MAB with multiple players, the problem was
formulated in~\cite{Liu&Zhao:10TSP} with a simpler collision model:
regardless of the occurrence of collisions, each player always
observes the actual reward offered by the selected arm. In this
case, collisions affect only the immediate reward but not the
learning ability. It was shown that the optimal system regret has
the same logarithmic order as in the classic MAB with a single
player, and a Time-Division Fair sharing (TDFS) framework for
constructing order-optimal decentralized policies was proposed.
Under the same complete observation model,
decentralized MAB was also addressed in \cite{Anima,Gaiyi}, where
the single-player policy UCB1 was extended to the multi-player
setting under a Bernoulli reward model.
In~\cite{Tekin:3}, Tekin and Liu addressed decentralized learning under  general interference functions and light-tailed reward models.
In~\cite{Jain:12CDC,Jain:arx}, Kalathil \etal considered a more challenging case where arm ranks may be different across players
and addressed both i.i.d. and Markov reward models.
They proposed a decentralized policy that achieves near-$O(\log^2 T)$  regret for distributions with bounded support.
Different from this paper, all the above referenced work assumes complete reward observation under collisions
and focuses on specific light-tailed distributions.

\section{The Classic MAB}\label{sec:formulation}
Consider an $N$-arm bandit and a single player. At each time $t$,
the player chooses one arm to play. Playing arm $n$ yields i.i.d.
random reward $X_n(t)$ drawn from an unknown distribution $f_n(s)$.
Let $\Fc=(f_1(s),\cdots,f_N(s))$ denote the set of the unknown
distributions. We assume that the reward mean
$\theta_n\defeq\mathbb{E}[X_n(t)]$ exists for all $1\le n\le N$.

An arm selection policy $\pi$ is a function that maps from the
player's observation and decision history to the arm to play. Let
$\sigma$ be a permutation of $\{1,\cdots,N\}$ such that
$\theta_{\sigma(1)}\ge\theta_{\sigma(2)}\ge\cdots\ge\theta_{\sigma(N)}$.
The system performance under policy $\pi$ is measured by the regret
$R_T^{\pi}(\Fc)$ defined as
\[R_T^{\pi}(\Fc)\defeq T\theta_{\sigma(1)}-\mathbb{E}_{\pi}[\Sigma_{t=1}^TX_{\pi}(t)],\]
where $X_{\pi}(t)$ is the random reward obtained at time $t$ under
policy $\pi$, and $\mathbb{E}_{\pi}[\cdot]$ denotes the expectation
with respect to policy $\pi$. The objective is to minimize the rate
at which $R_T^{\pi}(\Fc)$ grows with $T$ under any distribution set
$\Fc$ by choosing an optimal policy $\pi^*$. We say that a policy is
order-optimal if it achieves a regret growing at the same order of
the optimal one. We point out that any policy with a sublinear
regret order achieves the maximum average
reward~$\theta_{\sigma(1)}$.

\section{The DSEE Approach}\label{sec:policy}

In this section, we present the DSEE approach and analyze its
performance for both light-tailed and heavy-tailed reward
distributions.

\subsection{The General Structure}\label{subsec:DSEEgenstruc}
Time is divided into two interleaving sequences: an exploration
sequence and an exploitation sequence. In the exploration sequence,
the player plays all arms in a round-robin fashion. In the
exploitation sequence, the player plays the arm with the largest
sample mean (or a properly chosen mean estimator) calculated from past reward observations. It is also
possible to use only the observations obtained in the exploration
sequence in computing the sample mean. This leads to the same
regret order with a significantly lower complexity since the sample
mean of each arm only needs to be updated at the same sublinear rate as the
exploration sequence. A detailed implementation of DSEE is given in
Fig~\ref{fig:policy_frame}.

\begin{figure}[htbp]
\begin{center}
\noindent\fbox{
\parbox{4.8in}
{ \centerline{\underline{{\bf The DSEE Approach}}} {
\begin{itemize}
\item Notations and Inputs: Let $\Ac(t)$ denote the set of time indices that belong to
the exploration sequence up to (and including) time $t$. Let
$|\Ac(t)|$ denote the cardinality of $\Ac(t)$. Let
$\overline{\theta}_{n}(t)$ denote the sample mean of arm $n$
computed from the reward observations at times in $\Ac(t-1)$. For
two positive integers $k$ and $l$, define $k\oslash l\defeq
((k-1)~\mbox{mod}~l)+1$, which is an integer taking values from
$1,2,\cdots,l$.\\[-1.5em]
\item At time $t$,
\begin{enumerate}
\item[1.] if $t\in\Ac(t)$, play arm $n=|\Ac(t)|\oslash N$;
\item[2.] if $t\notin\Ac(t)$, play arm $n^*=\arg\max\{\overline{\theta}_{n}(t),~1\le n\le N\}$.
\end{enumerate}
\end{itemize}} }} \caption{The DSEE approach for the classic MAB.}\label{fig:policy_frame}
\end{center}
\end{figure}

In DSEE, the tradeoff between exploration and exploitation is
balanced by choosing the cardinality of the exploration sequence. To
minimize the regret growth rate, the cardinality of the exploration
sequence should be set to the minimum that ensures a reward loss in
the exploitation sequence having an order no larger than the
cardinality of the exploration sequence. The detailed regret analysis
is given in the next subsection.

\subsection{Under Light-Tailed Reward Distributions}\label{subsec:opt}
In this section, we construct an exploration sequence in DSEE to
achieve the optimal logarithmic regret order for all light-tailed
reward distributions.

We
recall the definition of light-tailed distributions below.

\begin{definition}\label{def:mgf}
A random variable $X$ is light-tailed if its moment-generating
function exists, \ie there exists a $u_0>0$ such that for
all~$u\le|u_0|$, {\[M(u)\defeq\mathbb{E}[\exp(uX)]<\infty.\]}
Otherwise $X$ is heavy-tailed.
\end{definition}

For a zero-mean light-tailed random variable $X$, we
have~\cite{{CharekaEtal06JM}},
\begin{eqnarray}\label{eqn:mgfbound}
M(u)\le\exp(\zeta
u^2/2),~~~~\forall~u\le|u_0|,~\zeta\ge\sup\{M^{(2)}(u),~-u_0\le u\le
u_0\},
\end{eqnarray}
where $M^{(2)}(\cdot)$ denotes the second derivative of
$M(\cdot)$ and $u_0$ the parameter specified in Definition~\ref{def:mgf}. We
observe that the upper bound in~\eqref{eqn:mgfbound} is the
moment-generating function of a zero-mean Gaussian random variable
with variance $\zeta$. Thus, light-tailed distributions
are also called \emph{locally} sub-Gaussian
distributions. If the moment-generating function exists for all $u$,
the corresponding distributions are referred to as sub-Gaussian.
From~\eqref{eqn:mgfbound}, we have the following extended
Chernoff-Hoeffding bound on the deviation of the sample mean.

\begin{lemma}\label{lemma:chernoff}{\em (Chernoff-Hoeffding
Bound~\cite{Agrawal95SIAMJCO})} Let $\{X(t)\}_{t=1}^{\infty}$ be
i.i.d. random variables drawn from a light-tailed distribution. Let
$\overline{X_s}=(\Sigma_{t=1}^s X(t))/s$ and
$\theta=\mathbb{E}[X(1)]$. We have, for all $\delta\in [0,\zeta
u_0], a\in (0, \frac{1}{2\zeta}]$,
\begin{eqnarray}\label{eqn:chernoff}
\Pr(|\overline{X_s}-\theta|\ge \delta)\le 2\exp(-a\delta^2s).
\end{eqnarray}
\end{lemma}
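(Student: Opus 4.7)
The plan is to run the standard Chernoff argument, but carefully tracking the interval on which the sub-Gaussian MGF bound $M(u)\le\exp(\zeta u^2/2)$ from~\eqref{eqn:mgfbound} is valid. By considering $Y(t)\defeq X(t)-\theta$, I can assume without loss of generality that $\theta=0$, so the MGF bound applies directly to the centered variables.

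First I would handle the upper tail $\Pr(\overline{X_s}\ge\delta)$. For any $u\in(0,u_0]$, Markov's inequality applied to $\exp(u\sum_{t=1}^s X(t))$ together with the i.i.d.\ assumption gives
\begin{equation*}
\Pr(\overline{X_s}\ge\delta)\;=\;\Pr\bigl(\textstyle\sum_{t=1}^sX(t)\ge s\delta\bigr)\;\le\;e^{-us\delta}\,M(u)^s\;\le\;\exp\!\bigl(s(\zeta u^2/2-u\delta)\bigr),
\end{equation*}
using~\eqref{eqn:mgfbound}. The unconstrained minimizer in $u$ is $u^*=\delta/\zeta$, and the hypothesis $\delta\le\zeta u_0$ is exactly what ensures $u^*\in(0,u_0]$, so this choice is admissible. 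Substituting gives $\Pr(\overline{X_s}\ge\delta)\le\exp(-\delta^2 s/(2\zeta))$. Repeating the argument with $-X(t)$ in place of $X(t)$ (whose MGF on $[-u_0,u_0]$ satisfies the same quadratic bound by the symmetry in~\eqref{eqn:mgfbound}) yields the matching lower-tail bound, and a union bound then produces the factor $2$.

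Finally, to obtain the statement with a general $a\in(0,\tfrac{1}{2\zeta}]$ rather than the specific exponent $\tfrac{1}{2\zeta}$, I simply note that $\exp(-a\delta^2 s)\ge\exp(-\delta^2 s/(2\zeta))$ whenever $a\le\tfrac{1}{2\zeta}$, so the tighter bound just established implies the claimed inequality for every admissible $a$. This flexibility in $a$ is useful later when DSEE tunes the exploration cadence without needing to know $\zeta$ exactly.

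I do not expect any genuine obstacle; the one place that needs care is the admissibility check $u^*=\delta/\zeta\le u_0$, which is precisely why the hypothesis restricts $\delta$ to $[0,\zeta u_0]$ rather than allowing arbitrary deviations as in the classical bounded-support Chernoff--Hoeffding inequality. The rest is bookkeeping: verifying that the symmetric MGF bound holds on both sides of zero (which follows from the definition of $\zeta$ as a supremum of $M^{(2)}(u)$ over a symmetric interval) and combining the two one-sided bounds.
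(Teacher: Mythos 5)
Your argument is correct: the paper itself offers no proof of this lemma (it defers to the cited reference~\cite{Agrawal95SIAMJCO}), and your Chernoff--Cram\'er derivation --- centering, Markov's inequality on $\exp(u\sum_t X(t))$, the locally sub-Gaussian MGF bound~\eqref{eqn:mgfbound}, optimizing at $u^*=\delta/\zeta$ with the admissibility check $\delta\le\zeta u_0$, symmetrizing, and relaxing the exponent to any $a\le\tfrac{1}{2\zeta}$ --- is exactly the standard route that establishes the stated inequality. No gaps; the degenerate case $\delta=0$ is trivially covered since the right-hand side is then $2$.
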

\vspace{0.5em}

Proven in~\cite{Agrawal95SIAMJCO}, Lemma~\ref{lemma:chernoff}
extends the original Chernoff-Hoeffding bound given
in~\cite{Hoeffding63JASA} that considers only random variables with
a bounded support. Based on Lemma~\ref{lemma:chernoff}, we show in
the following theorem that DSEE achieves the optimal logarithmic
regret order for all light-tailed reward distributions.

\begin{theorem}\label{thm:opt}
Construct an exploration sequence as follows. Let $a,\zeta,u_0$ be
the constants such that~\eqref{eqn:chernoff} holds. Define
$\Delta_{n}\defeq \theta_{\sigma(1)}-\theta_{\sigma(n)}$ for $n=2,\ldots,N$.
Choose a constant
$c\in(0,\Delta_2)$, a constant $\delta=\min\{c/2,\zeta u_0\}$, and a constant
$w> \frac{1}{a\delta^2}$. For each $t>1$, if
$|\Ac(t-1)|<N\lceil w\log t\rceil$, then include $t$ in $\Ac(t)$.
Under this exploration sequence, the resulting DSEE policy $\pi^*$
has regret, $\forall T$,
\begin{eqnarray}\label{eqn:optregret}
R_T^{\pi^*}(\Fc)\le \Sigma_{n=2}^{N}\lceil w\log T\rceil \Delta_n+2N\Delta_{N}(1+\frac{1}{a\delta^2w-1}).
\end{eqnarray}
\end{theorem}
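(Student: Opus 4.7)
The plan is to split the regret into two contributions, one from the exploration sequence and one from the exploitation sequence, and to bound each separately using elementary counting for the former and the Chernoff--Hoeffding bound of Lemma~\ref{lemma:chernoff} for the latter.

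First I would handle the exploration contribution. By the stopping rule defining $\mathcal{A}(t)$, at any time $t\le T$ we have $|\mathcal{A}(t)|\le N\lceil w\log T\rceil$, and since exploration visits arms in strict round-robin order, each arm is played at most $\lceil w\log T\rceil$ times during exploration up to~$T$. The associated reward loss is therefore at most $\sum_{n=2}^{N}\lceil w\log T\rceil\,\Delta_n$, matching the first term of~\eqref{eqn:optregret}.

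Next I would bound the exploitation contribution. The key observation is that if $t\notin\mathcal{A}(t)$ is an exploitation time, then $|\mathcal{A}(t-1)|\ge N\lceil w\log t\rceil$, so by the round-robin structure every arm has been sampled at least $s_t\defeq\lceil w\log t\rceil$ times. A suboptimal arm $n$ can be selected at time $t$ only if some sample mean has deviated from its true mean by at least $\delta$, since $\delta\le c/2<\Delta_2/2\le\Delta_n/2$ ensures that, when all $|\bar\theta_n(t)-\theta_n|<\delta$, the ranking of $\bar\theta_n(t)$ agrees with that of $\theta_n$ at least in identifying $\sigma(1)$. Using Lemma~\ref{lemma:chernoff} together with a union bound over $N$ arms,
\begin{equation*}
\Pr(\pi^*\text{ picks a suboptimal arm at }t)\le 2N\exp(-a\delta^2 s_t)\le 2N\,t^{-aw\delta^2}.
\end{equation*}
Each such error costs at most $\Delta_N$ per time slot, so summing and upper-bounding by an integral,
\begin{equation*}
\sum_{t=1}^{T}2N\Delta_N\,t^{-aw\delta^2}\le 2N\Delta_N\Bigl(1+\int_{1}^{T}t^{-aw\delta^2}\,dt\Bigr)\le 2N\Delta_N\Bigl(1+\tfrac{1}{aw\delta^2-1}\Bigr),
\end{equation*}
where convergence of the integral uses $w>1/(a\delta^2)$. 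Adding the two contributions yields~\eqref{eqn:optregret}.

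The only subtle point is verifying that $\delta<\Delta_n/2$ for all $n\ge 2$ really suffices to reduce a ``wrong pick'' to a large-deviation event of at least one sample mean; the rest is a Chernoff tail followed by an integral comparison. I do not expect any essential obstacle beyond keeping the constants consistent with the hypotheses $\delta\le \zeta u_0$ and $a\le 1/(2\zeta)$ that are required for Lemma~\ref{lemma:chernoff} to apply; the condition $c\in(0,\Delta_2)$ and the definition $\delta=\min\{c/2,\zeta u_0\}$ in the theorem statement are precisely what guarantees both.
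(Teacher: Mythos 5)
Your proposal is correct and follows essentially the same route as the paper's own proof: decompose the regret into exploration and exploitation parts, bound the former by the cardinality of the exploration sequence, and bound the latter by reducing a wrong pick at an exploitation time to the event that some sample mean deviates by more than $\delta$, then applying Lemma~\ref{lemma:chernoff} with $\tau_n(t)\ge w\log t$, a union bound over arms, and summing the resulting series $\sum_t t^{-a\delta^2 w}$ (your integral comparison gives exactly the paper's constant $2N\Delta_N(1+\frac{1}{a\delta^2 w-1})$). The one point you flag as subtle---that $\delta<\Delta_2/2$ suffices to identify $\sigma(1)$ when all sample means are $\delta$-accurate---is handled the same way in the paper and holds for exactly the reason you give.
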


\begin{proof}
Without loss of generality, we assume that $\{\theta_n\}_{n=1}^N$
are distinct. Let $R_{T,O}^{\pi^*}(\Fc)$ and $R_{T,I}^{\pi^*}(\Fc)$ denote, respectively,
regret incurred during the exploration and the exploitation sequences.
From the construction of the exploration sequence, it is easy to see that
{\begin{eqnarray}
R_{T,O}^{\pi^*}(\Fc)\le\Sigma_{n=2}^{N}\lceil w\log T\rceil \Delta_n.
\label{eq:RTO1}
\end{eqnarray}}

During the exploitation sequence, a reward loss happens if the
player incorrectly identifies the best arm. We thus have
\begin{eqnarray} \nn
R_{T,I}^{\pi^*}(\Fc)&\leq&
\mathbb{E}[\Sigma_{t\notin\Ac(T),t\leq T}\mathbb{I}(\pi^*(t)\neq
\sigma(1))]\Delta_{N}\\
&=&\Sigma_{t\notin\Ac(T),t\leq T}\Pr(\pi^*(t)\neq \sigma(1))\Delta_{N}.
\end{eqnarray}
For $t\notin\Ac(T)$,
define the following event
{\begin{eqnarray}\label{Event}
\Ec(t)\defeq\{|\overline{\theta}_{n}(t)-\theta_n|\le\delta,~\forall~1\le
n\le N\}.
\end{eqnarray}}
From the choice of $\delta$, it is easy to see that under $\Ec(t)$, the arm ranks are correctly identified. We thus have
\begin{eqnarray}\nn
R_{T,I}^{\pi^*}(\Fc)&\le& \Sigma_{t\notin\Ac(T),t\leq T}\Pr(\overline{\Ec(t)})\Delta_{N} \\ \nn
&=&\Sigma_{t\notin\Ac(T),t\leq T}\Pr(\exists~1\le n\le N~\mbox{s.t.}~|\overline{\theta}_{n}(t)-\theta_{n}|> \delta)\Delta_{N}\\
&\le&\Sigma_{t\notin\Ac(T),t\leq T} \Sigma_{n=1}^{N} \Pr(|\overline{\theta}_{n}(t)-\theta_{n}|> \delta)\Delta_{N},
\label{eq:RTI1}
\end{eqnarray}
where \eqref{eq:RTI1} results from the union bound.
Let $\tau_{n}(t)$ denote the number of times that arm $n$ has been played during the exploration sequence up to time $t$.
Applying Leamma ~\ref{lemma:chernoff} to \eqref{eq:RTI1}, we have
\begin{eqnarray}
R_{T,I}^{\pi^*}(\Fc)&\le& 2\Delta_N \Sigma_{t\notin\Ac(T),t\leq T}\Sigma_{n=1}^{N}\exp(-a\delta^2\tau_n(t)) \nn \\
&\le& 2\Delta_N \Sigma_{t\notin\Ac(T),t\leq T}\Sigma_{n=1}^{N}\exp(-a\delta^2w\log t) \label{eqn:LogLT}\\
&=& 2\Delta_N \Sigma_{t\notin\Ac(T),t\leq T}\Sigma_{n=1}^{N}t^{-a\delta^2w}\nn \\
&\le& 2N\Delta_N \Sigma_{t=1}^{\infty}t^{-a\delta^2w} \nn \\
&\le& 2N\Delta_{N}(1+\frac{1}{a\delta^2w-1}), \label{eqn:LogLT2}
\end{eqnarray}
where \eqref{eqn:LogLT} comes from $\tau_{n}(t) \ge w \log t$ and \eqref{eqn:LogLT2} from $a\delta^2w>1$.

Combining \eqref{eq:RTO1} and \eqref{eqn:LogLT2}, we arrive at the theorem.
\end{proof}

\vspace{.5em}

The choice of the exploration sequence given in Theorem~\ref{thm:opt} is not unique. In particular,
when the horizon length $T$ is given, we can choose a single block of exploration
followed by a single block of exploitation. In the case of infinite horizon,
we can follow the standard technique of partitioning the time horizon
into epochs with geometrically growing lengths and applying the finite-$T$
scheme to each epoch.

We point out that the logarithmic regret order requires certain knowledge about the
differentiability of the best arm. Specifically, we need a lower
bound (parameter $c$ defined in Theorem~\ref{thm:opt}) on the
difference in the reward mean of the best and the second best arms.
We also need to know the bounds on parameters $\zeta$ and $u_0$ such
that the Chernoff-Hoeffding bound~\eqref{eqn:chernoff} holds. These
bounds are required in defining $w$ that specifies the minimum
leading constant of the logarithmic cardinality of the exploration
sequence necessary for identifying the best arm. However, we show
that when no knowledge on the reward models is available, we can increase the
cardinality of the exploration sequence of $\pi^*$ by an arbitrarily
small amount to achieve a regret arbitrarily close to the
logarithmic order.

\begin{theorem}\label{thm:nearlog}
Let $f(t)$ be any positive increasing sequence with
$f(t)\rightarrow\infty$ as $t\rightarrow\infty$. Construct an exploration sequence as follows.
For each $t>1$, include $t$ in
$\Ac(t)$ if $|\Ac(t-1)|<N\lceil f(t)\log t\rceil$. The resulting DSEE policy $\pi^*$ has regret
\begin{eqnarray}\nn
R_{T}^{\pi^*}(\Fc)=O(f(T)\log T).
\end{eqnarray}
\end{theorem}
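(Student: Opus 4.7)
The plan is to mimic the proof of Theorem~\ref{thm:opt} almost verbatim, with the observation that, because $f(t)\to\infty$, any fixed constant threshold of the form ``$f(t)>1/(a\delta^2)$'' is eventually satisfied regardless of what the (unknown) distribution parameters $a,\zeta,u_0$ and the gap $\Delta_2$ happen to be. So the policy needs no tuning: the analysis chooses $\delta$ and the associated constants \emph{after} the policy has been fixed.

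First I would decompose $R_T^{\pi^*}(\Fc)=R_{T,O}^{\pi^*}(\Fc)+R_{T,I}^{\pi^*}(\Fc)$ into exploration and exploitation parts as in the previous proof. The exploration regret is immediate: since exploration uses round-robin and the exploration sequence up to time $T$ has cardinality at most $N\lceil f(T)\log T\rceil$, we get
\begin{eqnarray}
R_{T,O}^{\pi^*}(\Fc)\le \Sigma_{n=2}^{N}\lceil f(T)\log T\rceil \Delta_n = O(f(T)\log T).\nn
\end{eqnarray}

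For the exploitation regret, I would fix (in the analysis only) a $\delta=\min\{\Delta_2/3,\zeta u_0\}$ together with the constant $a$ from Lemma~\ref{lemma:chernoff}, so that under the event $\Ec(t)$ in~\eqref{Event} the best arm is correctly identified. Since $f(t)\to\infty$, choose $T_0$ so large that $a\delta^2 f(t)\ge 2$ for all $t\ge T_0$. For $t\le T_0$ the reward loss is trivially bounded by a constant $T_0\Delta_N$. For $t>T_0$, replay the chain of inequalities leading to~\eqref{eqn:LogLT}: using $\tau_n(t)\ge f(t)\log t$ and the Chernoff-Hoeffding bound,
\begin{eqnarray}
\Sigma_{t>T_0,\,t\notin\Ac(T),\,t\le T}\Sigma_{n=1}^{N}\Pr(|\overline{\theta}_n(t)-\theta_n|>\delta)
\le 2N\Sigma_{t>T_0}^{\infty} t^{-a\delta^2 f(t)}
\le 2N\Sigma_{t=1}^{\infty}t^{-2}<\infty.\nn
\end{eqnarray}
Hence $R_{T,I}^{\pi^*}(\Fc)=O(1)$, and combining with the exploration bound yields the claimed $O(f(T)\log T)$.

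I do not expect any serious obstacle: the only conceptual point is that the constants $a,\zeta,u_0,\Delta_2$ are unknown to the player but are perfectly well-defined by the distribution~$\Fc$, so they may be used in the analysis to pick $T_0$; once $f(t)$ eventually dominates the fixed threshold $1/(a\delta^2)$, the tail-probability sum over the exploitation phase becomes a convergent $p$-series and contributes only a constant. The mild care needed is just to verify that the chosen $\delta$ satisfies $\delta\in[0,\zeta u_0]$ and $\delta<\Delta_2/2$ (both guaranteed by the definition of $\delta$) so that Lemma~\ref{lemma:chernoff} applies and the event $\Ec(t)$ indeed implies correct arm ranking.
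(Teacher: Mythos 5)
Your proposal is correct and follows essentially the same route as the paper's own proof: both split the exploitation sum at a finite time beyond which $a\delta^2 f(t)$ exceeds a constant $b>1$ (you take $b=2$), bound the early terms by a constant and the tail by a convergent $p$-series, and add the $O(f(T)\log T)$ exploration cost. The observation that the constants $a,\delta$ are used only in the analysis, not in the policy, is exactly the point the paper relies on.
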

\begin{proof}
Recall constants $a$ and $\delta$ defined in Theorem~\ref{thm:opt}.
Note that since $f(t)\rightarrow\infty$ as $t\rightarrow\infty$, there exists a $t_0$ such that for any $t> t_0$, $a\delta^2f(t)\ge b$ for some $b>1$.
Similar to the proof of Theorem~\ref{thm:opt}, we have, following \eqref{eq:RTI1},
\begin{eqnarray}\nn
R_{T,I}^{\pi^*}(\Fc)&\le&2N\Delta_N \Sigma_{t\notin\Ac(T),t\leq T}\exp(-a\delta^2f(t)\log t)\\\nn
&\le& \Sigma_{t=1}^{t_0}\exp(-a\delta^2f(t)\log t)+\Sigma_{t=t_0+1}^{\infty}t^{-b}\\
&\le& t_0+\frac{1}{b-1}t_0^{1-b}.
\label{eq:RTI2}
\end{eqnarray}
It is easy to see that
{\begin{eqnarray}
R_{T,O}^{\pi^*}(\Fc)\le\Sigma_{n=2}^{N}\lceil f(T)\log T\rceil \Delta_n.
\label{eq:RTO2}
\end{eqnarray}}
Combining \eqref{eq:RTI2} and \eqref{eq:RTO2}, we have
\begin{equation}
R_{T}^{\pi^*}(\Fc) \le \sum_{n=2}^{N}\lceil f(T)\log T\rceil \Delta_n \, +\, t_0+\frac{1}{b-1}t_0^{1-b}.
\label{eq:nearLogConst}
\end{equation}
\end{proof}

From the proof of Theorem~\ref{thm:nearlog}, we observe
a tradeoff between the regret order and the finite-time performance.
While one can arbitrarily approach the logarithmic regret order by reducing
the diverging rate of $f(t)$, the price is a larger additive constant as shown in~\eqref{eq:nearLogConst}.

\subsection{Under Heavy-Tailed Reward Distributions}\label{subsec:gen}

In this subsection, we consider the regret performance of DSEE under heavy-tailed reward distributions.

\subsubsection{Sublinear Regret with Sublinear Complexity and No Prior Knowledge}
\label{sec:HeavySub}

For heavy-tailed reward distributions, the Chernoff-Hoeffding bound
does not hold in general. A weaker bound on the deviation of the
sample mean from the true mean is established in the lemma below.
\begin{lemma}\label{lemma:twop}
Let $\{X(t)\}_{t=1}^{\infty}$ be i.i.d. random variables drawn from
a distribution with finite $p$th moment ($p>1$). Let $\overline{X}_t=\frac{1}{t}\Sigma_{k=1}^t
X(k)$ and $\theta=\mathbb{E}[X(1)]$. We have, for all
$\delta>0$,
\begin{eqnarray}
\Pr(|\overline{X}_t-\theta|\ge\delta)\le \left\{
\begin{array}{cc} (3\sqrt2)^pp^{p/2} \frac{\mathbb{E}[|X(1)-\theta|^p]}{\delta^p}t^{1-p}~~~& \text{if}~~~ p\le2\\
(3\sqrt2)^pp^{p/2} \frac{\mathbb{E}[|X(1)-\theta|^p]}{\delta^p}t^{-p/2}~~~& \text{if}~~~ p>2
\end{array}\right.
\end{eqnarray}

\end{lemma}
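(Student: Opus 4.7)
The plan is to combine Markov's inequality (applied at order $p$) with a Marcinkiewicz--Zygmund-type moment bound for the centered partial sum. Let $Y_k \defeq X(k) - \theta$, so $\{Y_k\}$ are i.i.d., mean zero, with $\mathbb{E}|Y_1|^p = \mathbb{E}|X(1)-\theta|^p < \infty$, and set $S_t \defeq \sum_{k=1}^t Y_k$. Since $\overline{X}_t - \theta = S_t/t$, Markov's inequality at order $p$ gives
\begin{equation*}
\Pr\bigl(|\overline{X}_t - \theta| \ge \delta\bigr) \;=\; \Pr\bigl(|S_t| \ge t\delta\bigr) \;\le\; \frac{\mathbb{E}|S_t|^p}{(t\delta)^p}.
\end{equation*}
The task then reduces to controlling $\mathbb{E}|S_t|^p$ with the appropriate power of $t$.

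For the bound on $\mathbb{E}|S_t|^p$, I would invoke the Marcinkiewicz--Zygmund inequality in the form
\begin{equation*}
\mathbb{E}|S_t|^p \;\le\; B_p^{\,p}\, \mathbb{E}\!\left[\Bigl(\sum_{k=1}^t Y_k^2\Bigr)^{p/2}\right],
\end{equation*}
with an explicit constant $B_p$ of order $\sqrt{p}$; matching the statement requires $B_p \le 3\sqrt{2p}$, which is the standard explicit constant one obtains through Burkholder's inequality for martingale differences (or directly for the i.i.d.\ symmetric case via a randomization/decoupling argument). Granted this, the two regimes split naturally according to the concavity or convexity of $x \mapsto x^{p/2}$.

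In the regime $1 < p \le 2$, the function $x \mapsto x^{p/2}$ is concave on $[0,\infty)$ and subadditive, so $(\sum_k Y_k^2)^{p/2} \le \sum_k (Y_k^2)^{p/2} = \sum_k |Y_k|^p$, giving $\mathbb{E}(\sum_k Y_k^2)^{p/2} \le t\,\mathbb{E}|Y_1|^p$. Plugging into Markov yields the $t^{1-p}$ bound. In the regime $p>2$, the function $x \mapsto x^{p/2}$ is convex, so Jensen's inequality applied to the uniform average gives
\begin{equation*}
\Bigl(\sum_{k=1}^t Y_k^2\Bigr)^{p/2} = t^{p/2}\Bigl(\frac{1}{t}\sum_{k=1}^t Y_k^2\Bigr)^{p/2} \le t^{p/2-1}\sum_{k=1}^t |Y_k|^{p},
\end{equation*}
so $\mathbb{E}(\sum_k Y_k^2)^{p/2} \le t^{p/2}\,\mathbb{E}|Y_1|^p$. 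Substituting into Markov produces the $t^{-p/2}$ bound. In both cases the factor $B_p^{\,p}/\delta^p \le (3\sqrt{2})^p p^{p/2}/\delta^p$ lines up with the stated constant.

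The main obstacle is obtaining the constant $(3\sqrt{2})^p p^{p/2}$ with full rigor: the Jensen/subadditivity step is elementary, but pinning down the explicit Marcinkiewicz--Zygmund constant requires either citing a version of Burkholder's inequality with tracked constants, or running a symmetrization argument and invoking Khintchine's inequality with sharp constants. Assuming that black-box bound, the remainder is routine.
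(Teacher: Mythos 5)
Your proposal is correct and follows essentially the same route as the paper: Markov's inequality at order $p$, the Marcinkiewicz--Zygmund inequality with the explicit constant $(3\sqrt{2})^p p^{p/2}$ (which the paper takes from Ren and Liang's result on the best constant rather than from Burkholder or Khintchine), and then the subadditivity/Jensen split according to whether $p\le 2$ or $p>2$. The only difference is notational---the paper's $B_p$ is your $B_p^{\,p}$---and the constant you would need, $B_p\le 3\sqrt{2p}$, is exactly what the cited reference supplies.
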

\begin{proof}
By Chebyshev's inequality we have,
{\begin{eqnarray}\label{eqn:Zygmund}\nn
\Pr(|\overline{X}_t-\theta|\ge\delta)&\le& \frac{\mathbb{E}[|\overline{X}_t-\theta|^p]}{\delta^p}\\ \nn
&=&\frac{\mathbb{E}[|\Sigma_{k=1}^t(X(k)-\theta)|^p]}{t^p\delta^p}\\
&\le& B_p\frac{\mathbb{E}[(\Sigma_{k=1}^t(X(k)-\theta)^2)^{p/2}]}{t^p\delta^p},
\end{eqnarray}}
where~\eqref{eqn:Zygmund} holds by the Marcinkiewicz-Zygmund inequality for some $B_p$ depending only on $p$.
The best constant in the Marcinkiewicz-Zygmund inequality was shown in~\cite{Zygmund} to be $B_p\le(3\sqrt2)^pp^{p/2}$.

Next, we prove Lemma~\ref{lemma:twop} by considering the two cases of $p$.
\begin{itemize}
\item $p\le2$:
Considering the inequality $(\Sigma_{k=1}^t a_k)^\alpha\le\Sigma_{k=1}^t a_k^\alpha$ for $a_k\ge 0$ and $\alpha\le1$
(which can be easily shown using induction), we have, from \eqref{eqn:Zygmund},
{\begin{eqnarray}\nn
\Pr(|\overline{X}_t-\theta|\ge\delta)&\le&B_p\frac{\mathbb{E}[\Sigma_{k=1}^t|X(k)-\theta|^p]}{t^p\delta^p}\\
&=&B_p \frac{\mathbb{E}[|X(1)-\theta|^p]}{\delta^p}t^{1-p}.
\end{eqnarray}}
\item $p>2$:
Using Jensen's inequality, we have, from \eqref{eqn:Zygmund},
{\begin{eqnarray}\nn
\Pr(|\overline{X}_t-\theta|\ge\delta)&\le&B_p\frac{\mathbb{E}[t^{p/2-1}\Sigma_{k=1}^t|X(k)-\theta|^p]}{t^p\delta^p}\\
&=&B_p \frac{\mathbb{E}[|X(1)-\theta|^p]}{\delta^p}t^{-p/2}.
\end{eqnarray}}
\end{itemize}

\end{proof}
Based on Lemma~\ref{lemma:twop}, we have the following results
on the regret performance of DSEE under heavy-tailed reward distributions.

\begin{theorem}\label{thm:general}
Assume that the reward distributions have finite $p$th order moment ($p>1$).
Construct an exploration sequence as follows. Choose a constant
$v>0$. For each $t>1$, include $t$ in
$\Ac(t)$ if $|\Ac(t-1)|<vt^{1/p}$ for $1<p\le 2$ or $|\Ac(t-1)|<vt^{\frac{1}{1+p/2}}$ for $p>2$.
Under this exploration sequence, the resulting DSEE policy
$\pi^p$ has regret
\begin{eqnarray}
R^{\pi^p}_T(\Fc)\le\left\{
\begin{array}{ll}
O(T^{1/p})& \text{if}~~ 1<p\le2\\
O(T^{\frac{1}{1+p/2}})& \text{if}~~ p>2
\end{array}\right.
\end{eqnarray}
\end{theorem}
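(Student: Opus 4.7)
The plan is to mirror the proof of Theorem~\ref{thm:opt}, replacing the Chernoff--Hoeffding exponential concentration of Lemma~\ref{lemma:chernoff} with the polynomial moment bound of Lemma~\ref{lemma:twop}. As before, I would assume the means $\{\theta_n\}$ distinct, fix $c\in(0,\Delta_2)$, and set $\delta=c/2$, so that on the event $\Ec(t)=\{|\overline{\theta}_n(t)-\theta_n|\le\delta,\ \forall~1\le n\le N\}$ the sample-mean maximizer equals $\sigma(1)$ and no exploitation loss is incurred at time $t$. Here there is no analogue of the $\zeta u_0$ cap, so $\delta=c/2$ is the only constraint. I would then decompose $R^{\pi^p}_T(\Fc)=R^{\pi^p}_{T,O}(\Fc)+R^{\pi^p}_{T,I}(\Fc)$ into exploration and exploitation contributions.

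The exploration term is immediate from the construction of $\Ac(T)$: its cardinality is at most $\lceil vT^{1/p}\rceil$ for $1<p\le 2$ (respectively $\lceil vT^{1/(1+p/2)}\rceil$ for $p>2$), and each exploration step loses at most $\Delta_N$, so $R^{\pi^p}_{T,O}(\Fc)$ is already of the claimed order.

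For the exploitation term, I would reuse the union-bound chain that produced~\eqref{eq:RTI1} to write
\[
R^{\pi^p}_{T,I}(\Fc)\le \Delta_N\sum_{t\notin\Ac(T),\,t\le T}\sum_{n=1}^N \Pr(|\overline{\theta}_n(t)-\theta_n|>\delta),
\]
and then apply Lemma~\ref{lemma:twop} with the fixed $\delta$. Because exploration is round-robin, each arm is sampled $\tau_n(t)\ge \lfloor |\Ac(t-1)|/N\rfloor \ge vt^{1/p}/N-O(1)$ times in the first regime, and $\tau_n(t)\ge vt^{1/(1+p/2)}/N-O(1)$ in the second. Substituting these lower bounds into the polynomial tail of Lemma~\ref{lemma:twop} turns each summand into $O(t^{(1-p)/p})=O(t^{1/p-1})$ for $1<p\le 2$, and $O(t^{-(p/2)/(1+p/2)})=O(t^{-p/(p+2)})$ for $p>2$.

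The final step is a routine summation. Since $p>1$, the exponent $1/p-1$ lies in $(-1,0)$, so $\sum_{t=1}^T t^{1/p-1}=O(T^{1/p})$. For $p>2$, the exponent $-p/(p+2)$ lies in $(-1,-1/2)$, so $\sum_{t=1}^T t^{-p/(p+2)}=O\!\left(T^{1-p/(p+2)}\right)=O\!\left(T^{2/(p+2)}\right)=O\!\left(T^{1/(1+p/2)}\right)$. In each case the exploitation regret matches the exploration regret, yielding the stated bound. The main conceptual point, and the only real obstacle, is choosing the exploration schedule so that the polynomial tail of Lemma~\ref{lemma:twop} is balanced against the linear cost of exploration: the exponents $1/p$ and $1/(1+p/2)$ are precisely the solutions of those balance equations, and once they are in place the rest of the argument mirrors Theorem~\ref{thm:opt} verbatim.
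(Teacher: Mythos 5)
Your proposal is correct and follows essentially the same route as the paper: the same exploration/exploitation decomposition, the same union bound leading to \eqref{eq:RTI1}, Lemma~\ref{lemma:twop} applied with a fixed $\delta<\Delta_2/2$, and the same power-sum estimate that balances the exploitation loss against the cardinality of the exploration sequence. The only (harmless) differences are that you track the factor of $N$ in $\tau_n(t)$ explicitly and spell out both ranges of $p$, whereas the paper details only the $p>2$ case and leaves the other as "similar."
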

An upper bound on the regret for each $T$ is given in~\ref{eq:HeavySub} in the proof.

\begin{proof}
We prove the theorem for the case of $p\ge2$, the other case can be shown similarly. Following a similar line of arguments as in the proof of Theorem~\ref{thm:opt},
we can show, by applying Lemma~\ref{lemma:twop} to \eqref{eq:RTI1},
\begin{eqnarray}\label{eqn:3}\nn
R^{\pi^p}_{T,I}(\Fc)&\le&\Delta_N (3\sqrt2)^pp^{p/2} \frac{\mathbb{E}[|X(1)-\theta|^p]}{\delta^p}v^{-p/2}\Sigma_{t=1}^Tt^{\frac{-p/2}{1+p/2}}\\
&\le&\Delta_N (3\sqrt2)^pp^{p/2} \frac{\mathbb{E}[|X(1)-\theta|^p]}{\delta^p}v^{-p/2}[(1+p/2)(T^{\frac{1}{1+p/2}}-1)+1]\\\nn
\end{eqnarray}
Considering the cardinality of the exploration sequence, we have, $\forall T$,
\begin{eqnarray}
R^{\pi^p}_T(\Fc)\le\left\{
\begin{array}{ll}
\Delta_N(3\sqrt2)^pp^{p/2} \frac{\mathbb{E}[|X(1)-\theta|^p]}{(\Delta_2/2)^p}v^{-p/2}[p(T^{\frac{1}{p}}-1)+1]+\lceil vT^{\frac{1}{p}}\rceil & \text{if } p\le2\\
\Delta_N(3\sqrt2)^pp^{p/2} \frac{\mathbb{E}[|X(1)-\theta|^p]}{(\Delta_2/2)^p}v^{-p/2}[(1+p/2)(T^{\frac{1}{1+p/2}}-1)+1]+\lceil vT^{\frac{1}{1+p/2}}\rceil & \text{if } p>2
\end{array}\right.
\label{eq:HeavySub}
\end{eqnarray}

\vspace{0.2em}

The regret order given in Theorem~\ref{thm:general} is thus readily seen.
\end{proof}

\vspace{1em}

\subsubsection{Logarithmic Regret Using Truncated Sample Mean}
\label{sec:HeavyLog}

Inspired by the work by Bubeck, Cesa-Bianchi, and Lugosi~\cite{Bubeck},
we show in this subsection that using the truncated sample mean, DSEE can offer
logarithmic regret order for heavy-tailed reward distributions with a carefully
chosen cardinality of the exploration sequence. Similar to the UCB variation developed
in~\cite{Bubeck}, this logarithmic regret order is achieved at the price of
prior information on the reward distributions and higher computational and memory requirement.
The computational and memory requirement, however, is significantly lower than that of the UCB variation
in~\cite{Bubeck}, since the DSEE approach only needs to store samples from and compute the truncated
sample mean at the exploration times with $O(\log T)$ order rather than each time instant.

The main idea is based on the following result on the truncated sample mean given in~\cite{Bubeck}.

\begin{lemma}(\cite{Bubeck}:) \label{lemma:deviation} Let
$\{X(t)\}_{t=1}^{\infty}$ be i.i.d. random variables satisfying
$\mathbb{E}[|X(1)|^p]\leq u$ for some constants $u>0$ and $p \in (1,2]$.
Let $\theta=\mathbb{E}[X(1)]$. Consider the truncated empirical mean $\widehat{\theta}(s,\epsilon)$ defined as
\begin{eqnarray}\label{eqn:TruncatedMean}
\widehat{\theta}(s,\epsilon)=\frac{1}{s}\sum_{t=1}^{s}X(t)\mathbbm{1}\{ |X(t)|\leq(\frac{ut}{\log(\epsilon^{-1})})^{1/{p}} \}.
\end{eqnarray}
Then for any $\epsilon \in (0,\frac{1}{2}]$,
{\begin{eqnarray} \label{eqn:LemIneq}
\Pr(|\widehat{\theta}(s,\epsilon)-\theta|>4u^{1/{p}}(\frac{\log(\epsilon^{-1})}{s})^{\frac{{p}-1}{{p}}})\leq 2\epsilon.
\end{eqnarray}}

\end{lemma}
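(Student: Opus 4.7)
The plan is to follow the standard two-step decomposition for truncated empirical means: split the error into a deterministic truncation bias and a zero-mean stochastic fluctuation, bound each separately, and combine. Let $B_t=(ut/\log(\epsilon^{-1}))^{1/p}$ and $Y_t=X(t)\mathbbm{1}\{|X(t)|\le B_t\}$, so that $\widehat{\theta}(s,\epsilon)=\frac{1}{s}\sum_{t=1}^s Y_t$ and
$$\widehat{\theta}(s,\epsilon)-\theta \;=\; \underbrace{\tfrac{1}{s}\sum_{t=1}^{s}\bigl(Y_t-\mathbb{E}[Y_t]\bigr)}_{\text{fluctuation}} \;+\; \underbrace{\tfrac{1}{s}\sum_{t=1}^{s}\bigl(\mathbb{E}[Y_t]-\theta\bigr)}_{\text{bias}}.$$
The \emph{increasing}-truncation design (with $B_t$ growing in $t$) is essential: it keeps the per-sample variance growing slowly while the per-sample bias decays fast enough so that both contributions share the target rate $(\log(\epsilon^{-1})/s)^{(p-1)/p}$.

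For the bias, I would use the standard Markov-style estimate $|\mathbb{E}[Y_t]-\theta|=|\mathbb{E}[X(t)\mathbbm{1}\{|X(t)|>B_t\}]|\le \mathbb{E}[|X(t)|^p]/B_t^{p-1}\le u/B_t^{p-1}$, which after substituting the formula for $B_t$ gives $|\mathbb{E}[Y_t]-\theta|\le u^{1/p}(\log(\epsilon^{-1})/t)^{(p-1)/p}$. Summing over $t=1,\ldots,s$ and comparing with $\int_0^s t^{-(p-1)/p}\,dt=p\,s^{1/p}$ yields a deterministic bound on the bias of the form $c_1(p)\,u^{1/p}(\log(\epsilon^{-1})/s)^{(p-1)/p}$ with $c_1(p)\le p$.

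For the fluctuation, I would apply Bernstein's inequality for independent (but \emph{not} identically distributed) centered bounded summands. Each $Y_t-\mathbb{E}[Y_t]$ is bounded in absolute value by $2B_t\le 2B_s$, and its variance satisfies $\mathbb{E}[Y_t^2]\le B_t^{2-p}\mathbb{E}[|X(t)|^p]\le u\,B_t^{2-p}$ using $2-p\ge 0$ together with $|X|\le B_t$ on the support of $Y_t$. Summing the variances (again via an integral comparison) gives a cumulative variance of order $u^{2/p}s^{2/p}\log(\epsilon^{-1})^{(p-2)/p}$. Plugging these into Bernstein and aiming for a fluctuation of size $c_2(p)\,u^{1/p}(\log(\epsilon^{-1})/s)^{(p-1)/p}$ yields an exponent linear in $\log(\epsilon^{-1})$, so the tail probability takes the form $2\exp(-\alpha(p,c_2)\log(\epsilon^{-1}))=2\epsilon^{\alpha(p,c_2)}$.

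Finally, I would choose $c_2(p)$ large enough that $\alpha(p,c_2)\ge 1$ uniformly in $p\in(1,2]$, giving probability at most $2\epsilon$ on the fluctuation event, and verify the budget constraint $c_1(p)+c_2(p)\le 4$, which is the sole place the constant $4$ in the statement enters. The main obstacle is precisely this constant juggling: one must split the $4u^{1/p}(\log(\epsilon^{-1})/s)^{(p-1)/p}$ budget between bias and fluctuation so that Bernstein's exponent exceeds $1$ uniformly on $p\in(1,2]$ (the worst case being $p\to 2$, where the variance term dominates the sub-exponential term in Bernstein's denominator), while the bias constant $c_1(p)=p$ already consumes up to $2$. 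Handling the $t$-dependent truncation threshold, rather than a single cutoff tuned to $s$, is what makes the Bernstein step slightly nonstandard and is also what lets the estimator be updated online without knowing the horizon.
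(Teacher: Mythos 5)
The paper does not prove this lemma at all: it is quoted verbatim from the cited reference \cite{Bubeck} (Lemma 1 there), so there is no in-paper proof to compare against. Your proposed argument is, in outline, exactly the proof given in that reference: the same bias/fluctuation split with the growing truncation level $B_t=(ut/\log(\epsilon^{-1}))^{1/p}$, the same bias bound $|\mathbb{E}[Y_t]-\theta|\le u B_t^{-(p-1)}$, the same variance bound $\mathbb{E}[Y_t^2]\le uB_t^{2-p}$, and Bernstein's inequality for the centered sum; all of these steps are correct. The one place where your sketch is not yet a proof is the constant accounting you yourself flag: if you invert the usual exponential form of Bernstein, the deviation you can certify is $\sqrt{2Vy}+\tfrac{2}{3}B_s y$ with $y=\log(\epsilon^{-1})$, which at $p=2$ gives a total budget of about $p+\sqrt{p}+\tfrac{2}{3}\approx 4.08>4$. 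To land under $4$ you need the sharper one-sided Bernstein/Bennett bound $\Pr\bigl(\sum(Y_t-\mathbb{E}Y_t)\ge\sqrt{2Vy}+\tfrac{1}{3}B_s y\bigr)\le e^{-y}$ (and the crude estimate $\sum_{t\le s}t^{(2-p)/p}\le s^{2/p}$ for the cumulative variance), which yields $c_1+c_2\le p+\sqrt{2}+\tfrac{1}{3}\le 2+\sqrt{2}+\tfrac{1}{3}<4$ uniformly over $p\in(1,2]$; a union bound over the two one-sided events then gives the stated $2\epsilon$.
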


\vspace{0.5em}

Based on Lemma~\ref{lemma:deviation}, we have the following result on the regret of DSEE.

{\begin{theorem}\label{thmHTReg}
Assume that the reward of each arm satisfies $\mathbb{E}[|X_n(1)|^{p}]\leq u$ for some constants $u>0$ and $p \in (1,2]$.
Let $a=4^{\frac{{p}}{1-{p}}}u^{\frac{1}{1-p}}$. Define
$\Delta_{n}\defeq \theta_{\sigma(1)}-\theta_{\sigma(n)}$ for $n=2,\ldots,N$.
Construct an exploration sequence as follows. Choose a constant $\delta\in(0,\, \Delta_2/2)$
and a constant $w>1/{a\delta^{{p}/({p}-1)}}$. For each $t>1$, if
$|\Ac(t-1)|<N\lceil w\log t\rceil$, then include $t$ in $\Ac(t)$.
At an exploitation time $t$, play the arm with the largest truncated sample mean given by
\begin{eqnarray}
\widehat{\theta}_n(\tau_n(t),\epsilon_n(t))=\frac{1}{\tau_n(t)}\sum_{k=1}^{\tau_n(t)}X_{n,k}
\mathbbm{1}\{ |X_{n,k}|\leq(\frac{uk}{\log(\epsilon_n(t)^{-1})})^{1/{p}} \},
\end{eqnarray}
where $X_{n,k}$ denotes the $k$th observation of arm $n$ during the exploration sequence, $\tau_n(t)$ the total
number of such observations, and $\epsilon_n(t)$ in the truncator for each arm at each time $t$ is given by
\begin{eqnarray}\label{eqn:eps}
\epsilon_n(t)=\exp(-a\delta^{\frac{{p}}{{p}-1}}\tau_n(t)).
\end{eqnarray}
The resulting DSEE policy $\pi^*$
has regret
\begin{eqnarray}\label{eqn:optregret2}
R_T^{\pi^*}(\Fc)\le \sum_{n=2}^{N}\lceil w\log T\rceil \Delta_n +2N\Delta_{N}(1+\frac{1}{a\delta^{p/(p-1)}w-1}).
\end{eqnarray}
\end{theorem}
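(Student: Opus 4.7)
The plan is to follow the same template as the proof of Theorem~\ref{thm:opt}, substituting Lemma~\ref{lemma:deviation} (the deviation bound for the truncated sample mean) for Lemma~\ref{lemma:chernoff} (the Chernoff-Hoeffding bound) at the concentration step. Specifically, I would write $R_T^{\pi^*}(\Fc)=R_{T,O}^{\pi^*}(\Fc)+R_{T,I}^{\pi^*}(\Fc)$. The exploration contribution is handled identically to the light-tailed case: since $|\Ac(T)|\le N\lceil w\log T\rceil$ and the arms are cycled round-robin, the regret accumulated in $\Ac(T)$ is bounded by $\sum_{n=2}^{N}\lceil w\log T\rceil \Delta_n$, which is precisely the first term of~\eqref{eqn:optregret2}.

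For the exploitation contribution, I would introduce the good event $\Ec(t)\defeq\{|\widehat{\theta}_{n}(\tau_n(t),\epsilon_n(t))-\theta_n|\le\delta,~\forall~1\le n\le N\}$. Since $\delta<\Delta_2/2$, on $\Ec(t)$ the best arm has the largest truncated sample mean, so no exploitation loss is incurred. Applying a union bound exactly as in~\eqref{eq:RTI1},
\begin{eqnarray}\nn
R_{T,I}^{\pi^*}(\Fc)\le \Delta_N \sum_{t\notin\Ac(T),t\le T}\sum_{n=1}^{N}\Pr\bigl(|\widehat{\theta}_{n}(\tau_n(t),\epsilon_n(t))-\theta_n|>\delta\bigr).
\end{eqnarray}

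The crucial calibration step is to verify that $\epsilon_n(t)=\exp(-a\delta^{p/(p-1)}\tau_n(t))$ with $a=4^{p/(1-p)}u^{1/(1-p)}$ is the choice that makes the deviation radius in Lemma~\ref{lemma:deviation} equal to $\delta$. Indeed, solving $4u^{1/p}\bigl(\log(\epsilon^{-1})/\tau_n(t)\bigr)^{(p-1)/p}=\delta$ for $\log(\epsilon^{-1})$ yields exactly $a\delta^{p/(p-1)}\tau_n(t)$, so Lemma~\ref{lemma:deviation} gives $\Pr(|\widehat{\theta}_n-\theta_n|>\delta)\le 2\epsilon_n(t)=2\exp(-a\delta^{p/(p-1)}\tau_n(t))$. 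Combined with $\tau_n(t)\ge w\log t$ from the exploration schedule, this produces the tail bound $2 t^{-a\delta^{p/(p-1)}w}$, and summing over $n$ and $t$ with $a\delta^{p/(p-1)}w>1$ yields $\sum_{t=1}^{\infty}t^{-a\delta^{p/(p-1)}w}\le 1+1/(a\delta^{p/(p-1)}w-1)$, producing the second term in~\eqref{eqn:optregret2}.

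The main obstacle, aside from bookkeeping the algebra linking $a$, $\delta$, $u$, and $p$, is verifying the hypothesis $\epsilon_n(t)\in(0,1/2]$ required by Lemma~\ref{lemma:deviation}. This fails for the first few exploitation slots where $\tau_n(t)<\log 2/(a\delta^{p/(p-1)})$. However, there are only $O(1)$ such early instants, and each contributes at most $\Delta_N$ to the regret, so they can be absorbed into the additive constant in~\eqref{eqn:optregret2}. Beyond that, the proof is a direct transplant of the proof of Theorem~\ref{thm:opt}, with the truncated mean and its tailored deviation bound replacing the sample mean and the Chernoff-Hoeffding bound; the fact that the final expression mirrors~\eqref{eqn:optregret} (only with the exponent $p/(p-1)$ on $\delta$) is an immediate consequence of this parallel structure.
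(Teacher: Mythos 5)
Your proposal is correct and takes essentially the same route as the paper's proof: decompose the regret into exploration and exploitation parts, calibrate $\epsilon_n(t)$ so that Lemma~\ref{lemma:deviation} yields $\Pr(|\widehat{\theta}_n(\tau_n(t),\epsilon_n(t))-\theta_n|>\delta)\le 2\exp(-a\delta^{p/(p-1)}\tau_n(t))$, and sum the resulting $t^{-a\delta^{p/(p-1)}w}$ tails using $a\delta^{p/(p-1)}w>1$. Your concern about the hypothesis $\epsilon_n(t)\in(0,1/2]$ is in fact vacuous---at any exploitation time $\tau_n(t)\ge w\log t$, so $\epsilon_n(t)\le t^{-a\delta^{p/(p-1)}w}<1/2$ for $t\ge 2$---but your conservative handling of it is harmless.
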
}

\begin{proof}
Following the same line of arguments as in the proof of Theorem~~\ref{thm:opt}, we have, following \eqref{eq:RTI1}
\begin{equation}\label{eqn:bounddev}
R_{T,I}^{\pi^*}(\Fc)\le \sum_{t\notin\Ac(T),t\leq T} \sum_{n=1}^{N}
\Pr(|\widehat{\theta}_{n}(\tau_{n}(t),\epsilon_{n}(t))-\theta_{n}|> \delta)\Delta_{N}.
\end{equation}
Based on Leamma~\ref{lemma:deviation}, we have, by
substituting $\epsilon_n(t)$ given in~\eqref{eqn:eps} into~\eqref{eqn:LemIneq},
\begin{equation}
\Pr(|\widehat{\theta}(\tau_n(t),\epsilon_n(t))-\theta_n|>\delta)\leq 2\exp(-a\delta^{\frac{p}{p-1}}\tau_n(t)).
\end{equation}
Substituting the above equation into \eqref{eqn:bounddev}, we have
{\begin{eqnarray}\label{eqn:LogHT}\nn
R_{T,I}^{\pi^*}(\Fc)&\le& 2\Delta_N \Sigma_{t\notin\Ac(T),t\leq T}\Sigma_{n=1}^{N}\exp(-a\delta^{\frac{p}{p-1}}\tau_n(t))\\\nn
&\le& 2\Delta_N \Sigma_{t\notin\Ac(T),t\leq T}\Sigma_{n=1}^{N}\exp(-a\delta^{\frac{p}{p-1}}w\log t)\\ \nn
&=& 2\Delta_N \Sigma_{t\notin\Ac(T),t\leq T}\Sigma_{n=1}^{N}t^{-a\delta^{p/(p-1)}w}\\\nn
&\le& 2N\Delta_N \Sigma_{t=1}^{\infty}t^{-a\delta^{p/(p-1)}w}\\\label{eqn:LogHT2}
&\le& 2N\Delta_{N}(1+\frac{1}{a\delta^{p/(p-1)}w-1})
\end{eqnarray}}
We then arrive at the theorem, considering $R_{T,O}^{\pi^*}(\Fc)\le\Sigma_{n=2}^{N}\lceil w\log T\rceil \Delta_n$.
\end{proof}

We point out that to achieve the logarithmic regret order under heavy-tailed distributions,
an upper bound on $\mathbb{E}[|X_n(1)|^{p}]$ for a certain $p$ needs to be known. The range constraint
of $p \in (1,2]$ in Theorem~\ref{thmHTReg} can be easily addressed:
if we know $\mathbb{E}[|X_n(1)|^{p}]\le u$ for a certain $p>2$, then $\mathbb{E}[|X|^2]\leq u+1$.
Similar to Theorem~\ref{thm:nearlog}, we can show that when no knowledge on the reward models is available, we can increase the
cardinality of the exploration sequence by an arbitrarily
small amount (any diverging sequence $f(t)$) to achieve a regret arbitrarily close to the
logarithmic order. One necessary change to the policy is that the constant $\delta$ in Theorem~\ref{thmHTReg}
used in \eqref{eqn:eps} for calculating the truncated sample mean should be replaced by $f(t)^\gamma$ for
some $\gamma\in (\frac{1-p}{p}, 0)$.

\section{Variations of MAB}\label{sec:DMAB_IM}
In this section, we extend the DSEE approach to several MAB variations
including MAB with various
objectives, decentralized MAB with multiple players and incomplete
reward observations under collisions, MAB with unknown Markov dynamics, and combinatorial MAB with
dependent arms.

\subsection{MAB under Various Objectives}
Consider a generalized MAB problem in which the desired arm is the
$m$th best arm for an arbitrary $m$. Such objectives may arise when
there are multiple players (see the next subsection) or other
constraints/costs in arm selection. The classic policies
in~\cite{Lai&Robbins85AAM,{AgrawalEtal95AAP},{Auer&etal02ML}} cannot
be directly extended to handle this new objective. For example, for
the UCB policy proposed by Auer~\etal in~\cite{Auer&etal02ML},
simply choosing the arm with the $m$th $(1<m\le N)$ largest index
cannot guarantee an optimal solution. This can be seen from the
index form given in~\eqref{eqn:AuerIndex}: when the index of the
desired arm is too large to be selected, its index tends to become
even larger due to the second term of the index. The rectification
proposed in~\cite{Gai&Krish11TR} is to combine the upper confidence
bound with a symmetric lower confidence bound. Specifically, the arm
selection is completed in two steps at each time: the upper
confidence bound is first used to filter out arms with a lower rank,
the lower confidence bound is then used to filter out arms with a
higher rank. It was shown in~\cite{Gai&Krish11TR} that under the
extended UCB, the expected time that the player does not play the
targeted arm has a logarithmic order.

The DSEE approach, however, can be directly extended to handle this
general objective. Under DSEE, all arms, regardless of their ranks,
are sufficiently explored by carefully choosing the cardinality of
the exploration sequence. As a consequence, this general objective
can be achieved by simply choosing the arm with the $m$th largest
sample mean in the exploitation sequence. Specifically, assume that
a cost $C_j>0~(j\neq m,1\le j\le N)$ is incurred when the player
plays the $j$th best arm. Define the regret $R_T^{\pi}(\Fc,m)$ as
the expected total costs over time $T$ under policy $\pi$.

\begin{theorem}\label{thm:mbest}
By choosing the parameter $c$ in Theorem~\ref{thm:opt} to satisfy
$0<c<\min\{\Delta_m-\Delta_{m-1},\Delta_{m+1}-\Delta_m\}$
or a
parameter $\delta$ in theorem~\ref{thm:general} and~\ref{thmHTReg} to satisfy 
$0<\delta<\frac{1}{2}\min\{\Delta_m-\Delta_{m-1},\Delta_{m+1}-\Delta_m\}$
and letting the player select the arm with the $m$-th largest sample
mean (or truncated sample mean in case of ~\ref{thmHTReg}) in the exploitation sequence,
Theorems~\ref{thm:opt}-\ref{thmHTReg} hold for $R_T^{\pi}(\Fc,m)$.
\end{theorem}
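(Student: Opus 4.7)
The proof will mirror the arguments used for Theorems~\ref{thm:opt}--\ref{thmHTReg} almost verbatim, with the only substantive change being a recalibration of the ``correctness event'' and replacement of $\Delta_N$ by $\max_{j\ne m}C_j$ in the per-mistake cost bound. First I would decompose the regret $R_T^{\pi}(\Fc,m) = R_{T,O}^{\pi}(\Fc,m)+R_{T,I}^{\pi}(\Fc,m)$ into exploration and exploitation contributions exactly as in the proof of Theorem~\ref{thm:opt}. Since the exploration sequence is round-robin over all $N$ arms and is unchanged, each of the $N-1$ ``wrong'' arms is played at the same logarithmic (or sublinear) rate as before; replacing the reward-gap factor $\Delta_n$ by the cost $C_j$ associated with playing the $j$th-best arm gives an exploration-cost bound of the same order as the corresponding bound in each original theorem.

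The heart of the argument is the exploitation analysis. I would redefine the good event as $\Ec(t)\defeq\{|\overline{\theta}_n(t)-\theta_n|\le\delta,\ \forall\,1\le n\le N\}$ (using the truncated mean $\widehat{\theta}_n$ in the heavy-tailed logarithmic case). The key deterministic lemma to establish is: under $\Ec(t)$, the arm with the $m$th-largest (truncated) sample mean is exactly $\sigma(m)$. This follows because, for any $k<m$, the inequality $\theta_{\sigma(k)}-\theta_{\sigma(m)}\ge\theta_{\sigma(m-1)}-\theta_{\sigma(m)}=\Delta_m-\Delta_{m-1}>2\delta$ together with $|\overline{\theta}_{\sigma(k)}-\theta_{\sigma(k)}|\le\delta$ and $|\overline{\theta}_{\sigma(m)}-\theta_{\sigma(m)}|\le\delta$ yields $\overline{\theta}_{\sigma(k)}>\overline{\theta}_{\sigma(m)}$; the symmetric argument using $\Delta_{m+1}-\Delta_m>2\delta$ handles $k>m$. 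Hence exactly $m-1$ sample means exceed $\overline{\theta}_{\sigma(m)}$, so the $m$th-largest is indeed at arm $\sigma(m)$. The choice $c<\min\{\Delta_m-\Delta_{m-1},\Delta_{m+1}-\Delta_m\}$ in the light-tailed case, combined with the rule $\delta=\min\{c/2,\zeta u_0\}$ inherited from Theorem~\ref{thm:opt}, guarantees $2\delta<c<\min\{\Delta_m-\Delta_{m-1},\Delta_{m+1}-\Delta_m\}$, and the direct choice of $\delta$ in Theorems~\ref{thm:general} and~\ref{thmHTReg} does the same.

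Given this gap lemma, the exploitation regret satisfies
\begin{equation}
R_{T,I}^{\pi}(\Fc,m)\le(\max_{j\ne m}C_j)\sum_{t\notin\Ac(T),\,t\le T}\sum_{n=1}^N\Pr(|\overline{\theta}_n(t)-\theta_n|>\delta),
\end{equation}
which is the same expression that appears in \eqref{eq:RTI1}, \eqref{eqn:3}, and \eqref{eqn:bounddev} up to the replacement of $\Delta_N$ by $\max_{j\ne m}C_j$. I then apply Lemma~\ref{lemma:chernoff}, Lemma~\ref{lemma:twop}, or Lemma~\ref{lemma:deviation} respectively, and the tail sums telescope exactly as in \eqref{eqn:LogLT2}, \eqref{eq:HeavySub}, and \eqref{eqn:LogHT2}. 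Adding the exploration bound yields the claimed regret orders for $R_T^{\pi}(\Fc,m)$ in each of the three regimes.

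I do not foresee a genuine obstacle, since the DSEE architecture already guarantees uniform sampling of every arm, which is precisely what makes targeting an arbitrary rank $m$ no harder than targeting the best arm; this is in sharp contrast to index-based policies such as UCB, where the selection rule must be rebuilt (as noted in the discussion preceding the theorem). The only care needed is bookkeeping: (i) verifying that the tighter gap condition implies both the light-tailed constraint $c<\Delta_2$ implicit in Theorem~\ref{thm:opt} (which is automatic since $\Delta_m-\Delta_{m-1}\le\Delta_2$), and (ii) keeping the constants $a,w$ in the logarithmic cases consistent with the new $\delta$ so that $a\delta^2 w>1$ (respectively $a\delta^{p/(p-1)}w>1$) continues to hold, which simply tightens the admissible range of $w$ but preserves the stated logarithmic order.
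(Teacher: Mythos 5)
Your proposal is correct and follows essentially the same route as the paper, whose own proof is only a two-sentence sketch saying the sample means concentrate tightly enough not to overlap with their neighbors; your gap lemma (under $\Ec(t)$, exactly $m-1$ sample means exceed $\overline{\theta}_{\sigma(m)}$) is precisely that ``key observation'' made rigorous, and the substitution of $\max_{j\neq m}C_j$ for $\Delta_N$ is the right bookkeeping. One harmless slip in your final remark: $\Delta_m-\Delta_{m-1}\le\Delta_2$ is false in general (the gap around the $m$th-best arm can exceed the gap between the top two arms), but the constraint $c<\Delta_2$ from Theorem~\ref{thm:opt} is not actually needed for the rank-$m$ objective, so nothing in your argument breaks.
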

\begin{proof}
The proof is similar to those of previous theorems. The key
observation is that after playing all arms sufficient times during
the exploration sequence, the probability that the sample mean of
each arm deviates from its true mean by an amount larger than the
non-overlapping neighbor is small enough to
ensure a properly bounded regret incurred in the exploitation
sequence.
\end{proof}

\vspace{.5em}

We now consider an alternative scenario that the player targets at a
set of best arms, say the $M$ best arms. We assume that a cost is
incurred whenever the player plays an arm not in the set. Similarly,
we define the regret $R_T^{\pi}(\Fc,M)$ as the expected total costs
over time $T$ under policy $\pi$.

\begin{theorem}\label{thm:Mbest}
By choosing the parameter $c$ in Theorem~\ref{thm:opt} to satisfy
$0<c<\Delta_{M+1}-\Delta_M$
or a
parameter $\delta$ in theorem~\ref{thm:general} and~\ref{thmHTReg} to
satisfy
$0<\delta<\frac{1}{2}(\Delta_{M+1}-\Delta_M)$
and letting the player
select one of the $M$ arms with the largest sample means (or truncated sample mean in case of ~\ref{thmHTReg}) in the
exploitation sequence, Theorem~\ref{thm:opt}-\ref{thmHTReg} hold for
$R_T^{\pi}(\Fc,M)$.
\end{theorem}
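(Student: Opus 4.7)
The plan is to carry over the three proofs of Theorems~\ref{thm:opt}, \ref{thm:general}, and~\ref{thmHTReg} essentially verbatim, replacing the ``best arm identification'' event by a ``top-$M$ set identification'' event. The exploration-sequence construction and cardinality are unchanged, so the exploration regret $R_{T,O}^{\pi}(\Fc,M)$ is bounded by the same expression as before up to replacing the gap-weighted sum $\sum_{n=2}^{N}\Delta_n$ by $\sum_{n=1}^{N} C_n \mathbbm{1}\{\sigma(n)\notin\text{top-}M\}$; this is a routine bookkeeping change.

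The one step that needs real argument is the exploitation bound. Define the good event
\[
\Ec(t)\defeq\{|\overline{\theta}_{n}(t)-\theta_n|\le\delta,~\forall~1\le n\le N\}
\]
(with $\delta=c/2$ in the light-tailed case, $\delta$ as specified in the heavy-tailed cases, and with $\overline{\theta}_n$ replaced by the truncated mean $\widehat{\theta}_n$ when invoking Theorem~\ref{thmHTReg}). I claim that under $\Ec(t)$ the set of $M$ arms with the largest sample means coincides with $\{\sigma(1),\ldots,\sigma(M)\}$. Indeed, for any $i\le M$ and $j>M$,
\[
\overline{\theta}_{\sigma(i)}(t)\ge\theta_{\sigma(i)}-\delta\ge\theta_{\sigma(M)}-\delta,\qquad \overline{\theta}_{\sigma(j)}(t)\le\theta_{\sigma(j)}+\delta\le\theta_{\sigma(M+1)}+\delta,
\]
and the hypothesis $2\delta<\theta_{\sigma(M)}-\theta_{\sigma(M+1)}=\Delta_{M+1}-\Delta_M$ forces the first quantity to strictly exceed the second. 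Hence every arm in the true top-$M$ set has a strictly larger sample mean than every arm outside, so choosing any arm among the top-$M$ sample means yields a cost-free play. Note that, unlike Theorem~\ref{thm:mbest}, no condition on the gaps \emph{within} the top-$M$ set is required because the player is indifferent among the $M$ best arms.

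Consequently, a non-zero cost can be incurred at an exploitation time $t$ only when $\overline{\Ec(t)}$ occurs, which gives
\[
R_{T,I}^{\pi}(\Fc,M)\le \Bigl(\max_{j:\,\sigma(j)\notin\text{top-}M}C_j\Bigr)\sum_{t\notin\Ac(T),\,t\le T}\sum_{n=1}^{N}\Pr\bigl(|\overline{\theta}_{n}(t)-\theta_n|>\delta\bigr)
\]
by the union bound. At this point the three concentration estimates already used earlier apply verbatim: the extended Chernoff-Hoeffding bound of Lemma~\ref{lemma:chernoff} reproduces the geometric series in \eqref{eqn:LogLT}-\eqref{eqn:LogLT2} for Theorem~\ref{thm:opt}, the Marcinkiewicz-Zygmund bound of Lemma~\ref{lemma:twop} reproduces the polynomial series in \eqref{eqn:3} for Theorem~\ref{thm:general}, and the truncated-mean bound of Lemma~\ref{lemma:deviation} (with the same choice of $\epsilon_n(t)$) reproduces \eqref{eqn:LogHT}-\eqref{eqn:LogHT2} for Theorem~\ref{thmHTReg}. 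Adding the exploration and exploitation bounds yields regret of the same order as in the respective original theorems.

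The only conceptual subtlety, and the step that would need the most care in a write-up, is verifying that the separation condition $2\delta<\Delta_{M+1}-\Delta_M$ is exactly what is needed to make the good-event argument go through: separation between the top-$M$ and bottom-$(N-M)$ groups is required, but no separation within the top group is needed (which is why a single one-sided gap appears in the theorem, in contrast with the two-sided gap of Theorem~\ref{thm:mbest}). Once this observation is in hand, the rest is a direct transcription of the earlier proofs.
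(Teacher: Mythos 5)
Your proposal is correct and follows essentially the same route as the paper, which only sketches this proof by noting that the condition on $c$ is relaxed relative to Theorem~\ref{thm:mbest} because one only needs to separate the top-$M$ set from the rest rather than resolve ranks within it; your good-event argument showing that $2\delta<\theta_{\sigma(M)}-\theta_{\sigma(M+1)}$ suffices is exactly the intended detail. The remaining steps (union bound plus the three concentration lemmas) are the same transcription the paper has in mind.
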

\begin{proof}
The proof is similar to those of previous theorems. Compared to
Theorem~\ref{thm:mbest}, the condition on $c$ for applying
Theorem~\ref{thm:opt} is more relaxed: we only need to know a lower
bound on the mean difference between the $M$-th best and the
$(M+1)$-th best arms. This is due to the fact that we only need to
distinguish the $M$ best arms from others instead of specifying
their rank.
\end{proof}

\vspace{.5em}

By selecting arms with different ranks of the sample mean in the
exploitation sequence, it is not difficult to see that
Theorem~\ref{thm:mbest} and Theorem~\ref{thm:Mbest} can be applied
to cases with time-varying objectives. In the next subsection, we
use these extensions of DSEE to solve a class of decentralized MAB
with incomplete reward observations.

\subsection{Decentralized MAB with Incomplete Reward Observations}

\subsubsection{Distributed Learning under Incomplete Observations}

Consider $M$ distributed players.
At each time $t$, each player chooses one arm to play. When multiple
players choose the same arm (say, arm $n$) to play at time $t$, a
player (say, player $m$) involved in this collision obtains a
potentially reduced reward $Y_{n,m}(t)$ with $\sum_{m=1}^M
Y_{n,m}(t)\le X_n(t)$. We focus on the case where the $M$ best arms have positive
reward mean and collisions cause reward loss.
The distribution of the partial reward $Y_{n,m}(t)$ under collisions can take any unknown form and has
any dependency on $n$, $m$ and $t$. Players make decisions solely
based on their partial reward observations $Y_{n,m}(t)$ without information exchange.
Consequently, a player does not know whether it is involved in a
collision, or equivalently, whether the received reward reflects the
true state $X_n(t)$ of the arm.

A local arm selection policy $\pi_m$ of player $m$ is a function
that maps from the player's observation and decision history to the
arm to play. A decentralized arm selection policy $\pi$ is thus
given by the concatenation of the local polices of all players:
{\[\pi_d\defeq[\pi_1,\cdots,\pi_M].\]}The system performance under
policy $\pi_d$ is measured by the system regret $R_T^{\pi_d}(\Fc)$
defined as the expected total reward loss up to time $T$ under
policy $\pi_d$ compared to the ideal scenario that players are
centralized and $\Fc$ is known to all players (thus the $M$ best
arms with highest means are played at each time). We have
\[R_T^{\pi_d}(\Fc)\defeq T\Sigma_{n=1}^M\theta_{\sigma(n)}-\mathbb{E}_{\pi}[\Sigma_{t=1}^TY_{\pi_d}(t)],\]
where $Y_{\pi_d}(t)$ is the total random reward obtained at time $t$
under decentralized policy $\pi_d$. Similar to the single-player
case, any policy with a sublinear order of regret would achieve the
maximum average reward given by the sum of the $M$ highest reward
means.

\vspace{.5em}

\subsubsection{Decentralized Policies under DSEE} In order to minimize the system regret, it is crucial
that each player extracts reliable information for learning the arm
rank. This requires that each player obtains and recognizes
sufficient observations that were received without collisions. As
shown in Sec.~\ref{sec:policy}, efficient learning can be achieved
in DSEE by solely utilizing the observations from the deterministic
exploration sequence. Based on this property, a decentralized arm
selection policy can be constructed as follows.
In the
exploration sequence, players play all arms in a round-robin fashion
with different offsets which can be predetermined based on, for
example, the players' IDs, to eliminate collisions. In the
exploitation sequence, each player plays the $M$ arms with the
largest sample mean calculated using only observations from the
exploration sequence under either a prioritized or a fair sharing
scheme. While collisions still occur in the exploitation sequences
due to the difference in the estimated arm rank across players
caused by the randomness of the sample means, their effect on the
total reward can be limited through a carefully designed cardinality
of the exploration sequence.
Note that under a
prioritized scheme, each player needs to learn the specific rank of
one or multiple of the $M$ best arms and Theorem~\ref{thm:mbest} can
be applied. While under a fair sharing scheme, a player only needs
to learn the set of the $M$ best arms (as addressed in
Theorem~\ref{thm:Mbest}) and use the common arm index for fair
sharing. An example based on a round-robin fair sharing scheme is
illustrated in Fig.~\ref{fig:DSEE_decen}. We point out that under a
fair sharing scheme, each player achieves the same average reward at
the same rate.

\begin{figure}[h]
\centerline{
\begin{psfrags}
\psfrag{dot}[c]{$\cdots$} \psfrag{p1}[c]{\footnotesize~~~~~~~~~
Player 1}\psfrag{p2}[c]{\footnotesize~~~~~~~~~ Player 2}
\psfrag{T}[c]{$t$}\psfrag{o}[l]{Exploration}\psfrag{i}[l]{Exploitation}
\psfrag{e}[l]{\footnotesize Estimated best
arms}\psfrag{e1}[l]{\footnotesize
$(1,2)$}\psfrag{e2}[l]{\footnotesize
$(1,2)$}\psfrag{e3}[l]{\footnotesize $(2,3)$}
\psfrag{e4}[l]{\footnotesize $(1,2)$} \scalefig{1}\epsfbox{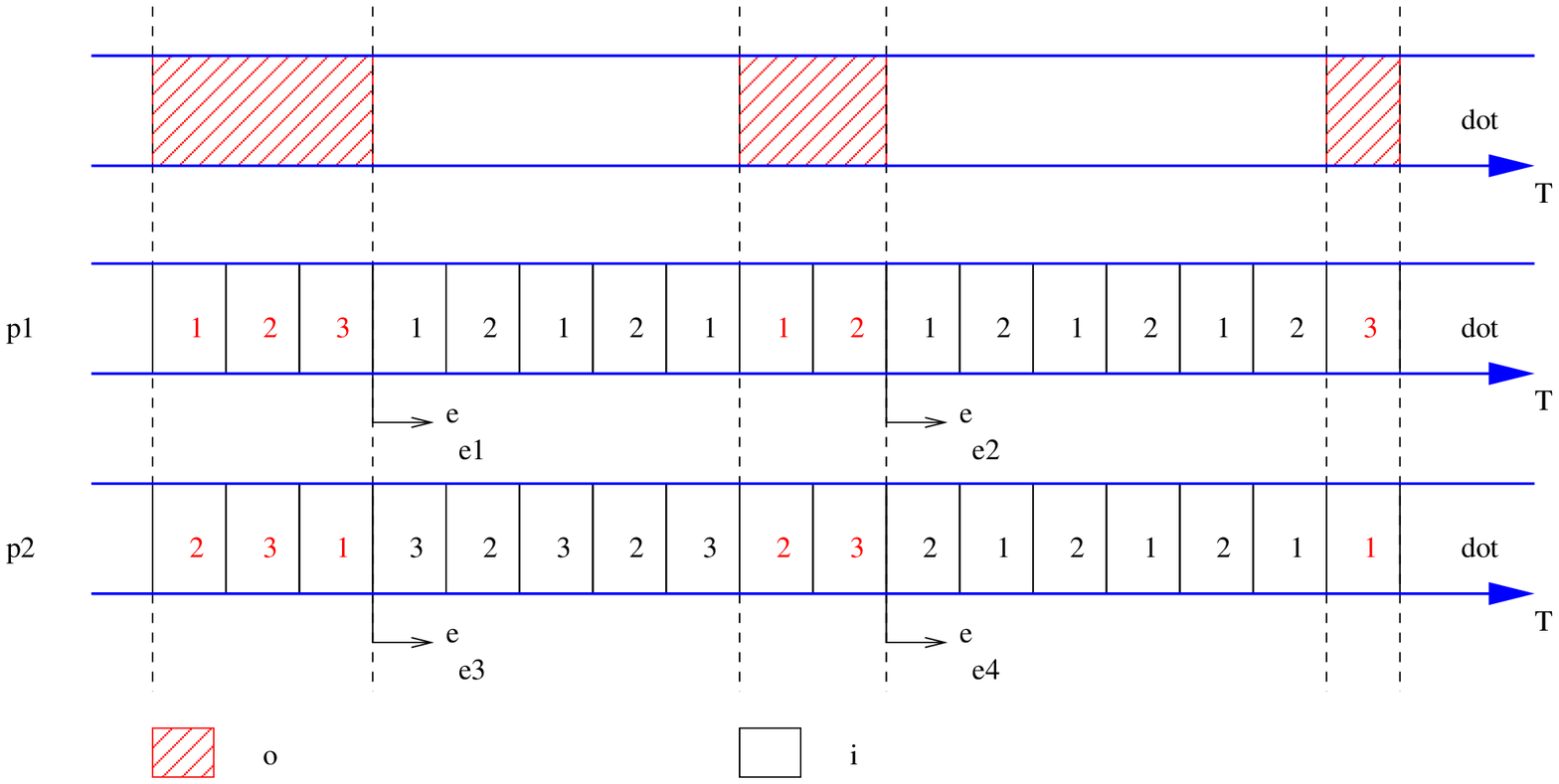}
\end{psfrags}}
\caption{An example of decentralized policies based on
DSEE~($M=2$,~$N=3$, the index of the selected arm at each time is
given).} \label{fig:DSEE_decen}
\end{figure}

\begin{theorem}
Under a decentralized policy based on DSEE, Theorem~\ref{thm:opt}-\ref{thmHTReg} hold for
$R_T^{\pi_d}(\Fc)$.
\end{theorem}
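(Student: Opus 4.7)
The plan is to mirror the single-player regret analyses of Theorems~\ref{thm:opt}--\ref{thmHTReg} by splitting the system regret into an exploration contribution and an exploitation contribution, and to exploit the crucial structural property that the offset-shifted round-robin used in the exploration sequence makes exploration collision-free for every player. Since each player's sample mean (or truncated sample mean) driving exploitation is built only from these clean samples, the concentration inequalities in Lemmas~\ref{lemma:chernoff}, \ref{lemma:twop}, and \ref{lemma:deviation} apply to every player's estimator exactly as they did in the single-player setting, with the number $\tau_n(t)$ of clean observations per arm still growing at the prescribed rate (either $\ge w\log t$ or the appropriate sublinear polynomial in $t$).

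For the exploration contribution $R_{T,O}^{\pi_d}(\Fc)$, I would bound the per-slot shortfall at any exploration instant by the constant $\sum_{n=1}^{M}\theta_{\sigma(n)}$ and multiply by $|\Ac(T)|$, which is chosen with the same cardinality as in the corresponding single-player theorem (namely $N\lceil w\log T\rceil$ for Theorems~\ref{thm:opt} and \ref{thmHTReg}, and the sublinear cardinalities of Theorem~\ref{thm:general}). This already yields the claimed order, inflated only by a bounded factor depending on $M$ and the top reward means.

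For the exploitation contribution $R_{T,I}^{\pi_d}(\Fc)$, I would bound the loss at any exploitation time $t$ by a constant and argue that such a loss can only occur if at least one of the $M$ players misidentifies the relevant rank set. Under the fair-sharing rule this is the event that some player fails to pick the set of the $M$ best arms, which is controlled via Theorem~\ref{thm:Mbest}; under a prioritized rule it is the event that some player fails to identify its specific targeted rank, which is controlled via Theorem~\ref{thm:mbest}. Applying a union bound across the $M$ players reduces the error probability at time $t$ to at most $M$ times the single-player misidentification probability, whose sum over $t$ was already shown in the proofs of Theorems~\ref{thm:opt}, \ref{thm:general}, and \ref{thmHTReg} to be a bounded constant (or to grow at the stated sublinear rate in the heavy-tailed case). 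Collisions that still occur inside the exploitation sequence when several players' estimates happen to agree on the same arm are absorbed into this same constant-per-slot bound and do not feed back into the estimators.

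The main subtlety, and the reason the construction works, is exactly the feedback-corruption issue: because $Y_{n,m}(t)$ can differ from $X_n(t)$ in an unknown, non-conservative, and player-dependent way during collisions, any learning rule that used \emph{all} observations would destroy the Chernoff-Hoeffding-type and Marcinkiewicz--Zygmund-type deviation bounds the previous proofs rely on. The deterministic separation of exploration and exploitation, combined with disjoint round-robin offsets, is precisely what restores the i.i.d., uncollided structure needed for the single-player concentration lemmas to be invoked verbatim. Once this is in place, the remaining work is a routine recombination of the two regret contributions, giving the same orders as in Theorems~\ref{thm:opt}--\ref{thmHTReg} for $R_T^{\pi_d}(\Fc)$.
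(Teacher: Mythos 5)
Your proposal is correct and follows essentially the same route as the paper's own (much terser) proof: both rest on the observations that the offset round-robin makes exploration collision-free so the single-player concentration lemmas apply to each player's estimator verbatim, and that exploitation-time collisions require at least one player to misidentify the $M$ best arms, so a union bound over players reduces the analysis to Theorems~\ref{thm:opt}--\ref{thmHTReg}. Your write-up simply supplies the details the paper leaves implicit.
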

\begin{proof}
It is not difficult to see that the regret in the decentralized
policy is completely determined by the learning efficiency of the
$M$ best arms at each player.
The key is to notice that during the exploitation sequence, collisions can only happen if at least one
player incorrectly identifies the $M$ best arms. As a consequence, to analyze the regret in the exploitation sequence,
we only need to consider such events. The proof is
thus similar to
those of previous theorems.

%
\end{proof}

\subsection{Combinatorial MAB with Dependent Arms}

In the classical MAB formulation, arms are independent. Reward observations from one arm do not
provide information about the quality of other arms. As a result, regret grows linearly with the number of arms.
However, many network optimization problems (such as optimal activation for online detection, shortest-path,  minimum spanning,
and dominating set under unknown and time-varying edge weights) lead to MAB with a large number of arms (e.g., the number of paths)
dependent through a small number of unknowns (e.g., the number of edges). While the dependency across arms can be ignored in
learning and existing learning algorithms directly apply, such a naive approach often yields a regret growing exponentially
with the problem size.

In~\cite{Liu&Zhao:WiOpt12}, we have shown that the DSEE approach can be extended to such combinatorial MAB problems to achieve
a regret that grows polynomially (rather than exponentially) with the problem size while maintaining its optimal logarithmic order
with time. The basic idea is to construct a set of properly chosen basis functions of the underlying network and explore only
the basis in the exploration sequence. The detailed regret analysis is rather involved
and is given in a separate study~\cite{Liu&Zhao:WiOpt12}.

\subsection{Restless MAB under Unknown Markov Dynamics}

The classical MAB formulation assumes an i.i.d. or a rested Markov reward model (see, e.g., \cite{Anantharam:87-2,Tekin:10})
which applies only to systems without memory or
systems whose dynamics only result from the player's action. In many practical applications such as target tracking and
scheduling in queueing and communication networks,  the system has memory and continues
to evolve even when it is not engaged by the player. For example, channels continue to evolve even when they are not sensed; queues continue to
grow due to new arrivals even when they are not served; targets continue to move even when they are not monitored.

Such applications can be formulated as restless MAB where the state of each arm continues to
evolve (with memory) even when it is not played. More specifically, the state of
each arm changes according to an unknown Markovian transition rule
when the arm is played and according to an arbitrary unknown random
process when the arm is not played. In~\cite{Liu&etal:13IT}, we have extended the DSEE approach
to the restless MAB under both the centralized
(or equivalently, the single-player) setting and the decentralized
setting with multiple distributed players. We have shown that the DSEE approach offers
a logarithmic order of the so-called weak regret. The detailed derivation is rather involved
and is given in a separate study~\cite{Liu&etal:13IT}.

\section{Conclusion}
The DSEE approach addresses the fundamental tradeoff between
exploration and exploitation in MAB by separating, in time, the two
often conflicting objectives. It has a clearly defined tunable
parameter---the cardinality of the exploration sequence----which can
be adjusted to handle any reward distributions and the
lack of any prior knowledge on the reward models. Furthermore, the
deterministic separation of exploration from exploitation allows
easy extensions to variations of MAB, including decentralized MAB with multiple players and incomplete
reward observations under collisions, MAB with unknown Markov dynamics, and combinatorial MAB with dependent
arms that often arise in
network optimization problems such as the shortest path, the minimum spanning, and the dominating set problems
under unknown random weights.

In algorithm design, there is often a tension between performance and generality.
The generality of the DSEE approach comes at a price of finite-time performance.
Even though DSEE offers the optimal regret order for any distribution, simulations show that the leading constant in the regret
offered by DSEE is often inferior to that of classic policies proposed in~\cite{Lai&Robbins85AAM,AgrawalEtal95AAP,Auer&etal02ML}
that target at specific types of distributions.
Whether one can improve the finite-time performance of DSEE without scarifying
its generality is an interesting future research direction.


\begin{thebibliography}{9}
\footnotesize

\bibitem{Robbins:52BAMS}
H. Robbins, ``Some Aspects of the Sequential Design of
Experiments,'' {\em Bull. Amer. Math. Soc.}, vol.~58, no.~5,
pp.~527-535, 1952.

\bibitem{Santner&Tamhane84book}
T.~Santner, A.~Tamhane, {\em Design of Experiments: Ranking and
Selection}, CRC Press, 1984.

\bibitem{MahajanTneketzis07FASM}
A. Mahajan and D. Teneketzis, ``Multi-armed Bandit Problems,'' {\em
Foundations and Applications of Sensor Management, A.~O.~Hero~III,
D.~A.~Castanon, D.~Cochran and K.~Kastella, (Editors)},
Springer-Verlag, 2007.

\bibitem{Lai&Robbins85AAM}
T. Lai and H. Robbins, ``Asymptotically Efficient Adaptive
Allocation Rules,'' {\em Advances in Applied Mathematics}, vol.~6,
no.~1, pp.~4-22, 1985.

\bibitem{AgrawalEtal95AAP}
R. Agrawal, ``Sample Mean Based Index Policies with $O(log n)$
Regret for the Multi-armed Bandit Problem,'' {\em Advances in
Applied Probability}, vol.~27, pp.~1054-1078, 1995.


\bibitem{Auer&etal02ML}
P.~Auer, N.~Cesa-Bianchi, P.~Fischer, ``Finite-time Analysis of the
Multiarmed Bandit Problem,'' {\em Machine Learning}, vol.~47,
pp.~235-256, 2002.

\bibitem{Liu&etal:13IT}
H. Liu, K. Liu, and Q. Zhao, ``Learning in A Changing World: Restless Multi-Armed Bandit with Unknown Dynamics,''
to appear in \emph{IEEE Transactions on Information Theory}.

\bibitem{Liu&Zhao:WiOpt12}
 K. Liu and Q. Zhao,
``Adaptive Shortest-Path Routing under Unknown and Stochastically Varying Link States,''
\emph{ Proc. of the 10th International Symposium on Modeling and Optimization in Mobile, Ad Hoc, and Wireless Networks (WiOpt)}, May, 2012.

\bibitem{Bubeck}
S.~Bubeck, N.~Cesa-Bianchi, G. Lugosi, ``Bandits with heavy tail,''
{\em    arXiv:1209.1727 [stat.ML]}, September 2002.


\bibitem{Liu&Zhao:10TSP}
K. Liu, Q. Zhao, ``Distributed Learning in Multi-Armed Bandit with
Multiple Players,'' {\em IEEE Transations on Signal Processing},
vol.~58, no.~11, pp.~5667-5681, Nov., 2010.





\bibitem{Anima}
A. Anandkumar, N. Michael, A.K. Tang, A. Swami, ``Distributed
Algorithms for Learning and Cognitive Medium Access with Logarithmic
Regret,'' {\em IEEE JSAC on Advances in Cognitive Radio Networking
and Communications}, vol.~29, no.~4, pp.~731-745, Mar., 2011.

\bibitem{Gaiyi}
Y. Gai and B. Krishnamachari, ``Decentralized Online Learning
Algorithms for Opportunistic Spectrum Access,''  {\em  IEEE Global
Communications Conference (GLOBECOM 2011)}, Houston, USA, Dec.,
2011.


{
\bibitem{Tekin:3}
C.~Tekin, M.~Liu, ``Performance and Convergence of Multiuser Online
Learning,'' {\em Proc. of Internanional Conference on  Game Theory
for Networks (GAMNETS)},  Apr., 2011, }


{
\bibitem{Jain:12CDC}
D.~Kalathil, N.~Nayyar, R.~Jain, ``Decentralized learning for multi-player multi-armed bandits,''
{\em Proc. of IEEE Conference on Decision and Control (CDC)},  Dec.,
2012.}


\bibitem{Jain:arx}
D.~Kalathil, N.~Nayyar, R.~Jain, ``Decentralized learning for multi-player multi-armed bandits,''
submitted to \emph{IEEE Trans. on Information Theory}, April 2012,
available at http://arxiv.org/abs/1206.3582.



\bibitem{CharekaEtal06JM}
P.~Chareka, O.~Chareka, S.~Kennendy, ``Locally Sub-Gaussian Random
Variable and the Stong Law of Large Numbers,'' {\em Atlantic
Electronic Journal of Mathematics}, vol.~1, no.~1, pp.~75-81, 2006.

\bibitem{Agrawal95SIAMJCO}
R.~Agrawal, ``The Continuum-Armed Bandit Problem,'' {\em SIAM J.
Control and Optimization}, vol.~33, no.~6, pp.~1926-1951, November,
1995.

\bibitem{Hoeffding63JASA}
W.~Hoeffding, ``Probability Inequalities for Sums of Bounded Random
Variables,'' {\em Journal of the American Statistical Association},
vol.~58, no.~301, pp.~13-30, March, 1963.


\bibitem{Zygmund}
Y.~Ren, H.~Liang ``On the best constant in Marcinkiewicz-Zygmund inequality,'' {\em Statistics and Probability Letters}, vol.~53, pp.~227-233, June, 2001.

\bibitem{Gai&Krish11TR}
Y. Gai and B. Krishnamachari, ``Decentralized Online Learning
Algorithms for Opportunistic Spectrum Access,'' Technical Report,
March, 2011. Available at
http://anrg.usc.edu/www/publications/papers/DMAB2011.pdf.

\bibitem{Anantharam:87-2}
V. Anantharam, P. Varaiya, J. Walrand, ``Asymptotically Efficient
Allocation Rules for the Multiarmed Bandit Problem with Multiple
Plays-Part II: Markovian Rewards,'' {\em IEEE Transaction on
Automatic Control}, vol.~AC-32, no.11, pp.~977-982, Nov., 1987.


\bibitem{Tekin:10}
C. Tekin, M. Liu, ``Online Algorithms for the Multi-Armed Bandit
Problem With Markovian Rewards,'' {\em Proc. of Allerton Conference
on Communications, Control, and Computing}, Sep., 2010.



%





\end{thebibliography}
\end{document}